\numberwithin{equation}{section}
\theoremstyle{plain}
\newtheorem{theorem}{Theorem}[section]
\newtheorem{lemma}[theorem]{Lemma}
\newtheorem{proposition}[theorem]{Proposition}
\newtheorem{assumption}{Assumption}
 \theoremstyle{definition}
\newtheorem{definition}[theorem]{Definition}
\newtheorem{remark}[theorem]{Remark}
\newenvironment{manualtheorem}[1]{%
  \manualtheoreminner
}{\endmanualtheoreminner}
\newenvironment{manuallemma}[1]{%
  \manuallem
}{\endmanuallem}
\newcommand{\im}{\operatorname{im}}
\DeclarePairedDelimiterX{\inp}[2]{\langle}{\rangle}{#1, #2}
\newcommand{\cB}{{\mathcal B}}
\newcommand{\cE}{{\mathcal E}}
\newcommand{\cH}{{\mathcal H}}
\newcommand{\cK}{{\mathcal K}}
\newcommand{\cL}{{\mathcal L}}
\newcommand{\cP}{{\mathcal P}}
\newcommand{\id}{{\text{Id} }}
\newcommand{\Gr}{{\text{Gr} }}
\newcommand{\ch}{{\text{ch} }}
\newcommand{\spin}{{ \text{spin}}}
\newcommand{\spinc}{{ \text{Spin}^c }}
\newcommand{\ind}{{ \text{Ind} }}
\newcommand{\tr}{{ \text{Tr} }}
\newcommand{\ba}{\begin{eqnarray}}
\newcommand{\na}{\end{eqnarray}}
\newcommand{\ban}{\begin{eqnarray*}}
\newcommand{\nan}{\end{eqnarray*}}
\newcommand{\C}{{\mathbb C}}
\newcommand{\R}{{\mathbb R}}
\newcommand{\Z}{{\mathbb Z}}
\renewcommand{\thefootnote}{\fnsymbol{footnote}}
\g@addto@macro{\endabstract}{\@setabstract}
\newcommand{\authorfootnotes}{\renewcommand\thefootnote{\@fnsymbol\c@footnote}}%
\title[]{Equivariant Toeplitz Index Theory \\ On Odd-Dimensional Manifolds With Boundary}
\subjclass[2020]{47B35, 19K56, 58J30.}
\keywords{Toeplitz operator, equivariant index theorem, equivariant spectral flow, equivariant $\eta$-invariant.}
\begin{document}


\begin{center}
    \vspace{-1cm}
	\maketitle
	
	\normalsize
	\authorfootnotes
	Johnny Lim\textsuperscript{1}\footnote[2]{Corresponding author.}, 
	Hang Wang\textsuperscript{2} 
	\par \bigskip
	
	\textsuperscript{1} \small{School of Mathematical Sciences, Universiti Sains Malaysia, Malaysia} \par
	\textsuperscript{2}School of Mathematical Sciences \& Shanghai  Key Laboratory of PMMP, \\ East China Normal University, Shanghai, China\par \bigskip
	
\end{center}

\address{School of Mathematical Sciences, Universiti Sains Malaysia, Penang, Malaysia
}
\email{johnny.lim@usm.my}

\address{School of Mathematical Sciences \& Shanghai  Key Laboratory of PMMP, East China Normal University, Shanghai, China 200241
}
\email{wanghang@math.ecnu.edu.cn}

\vspace{-0.5cm}

\begin{abstract}
In this paper, we establish an equivariant version of Dai-Zhang's Toeplitz index theorem for compact odd-dimensional spin manifolds with even-dimensional boundary. 
\end{abstract}

\tableofcontents

\vspace{-1em}
\section{Introduction}

Let $X$ be a closed even dimensional spin manifold. There is a canonical Poincar\'e duality between $K$-theory and $K$-homology, where a complex vector bundle $E$ over $X$ representing a class in $K^0(X)$ is mapped to the twisted spin Dirac operator $D_E$ representing a class in $K_0(X)$, the dual of $K$-theory:
\[
K^0(X)\rightarrow K_0(X), \qquad [E]\mapsto [D_E].
\]
The twisted Dirac operator $D_E$ is a $\Z_2$-graded Fredholm operator $\begin{bmatrix}0 & D_E^-\\ D_E^+ & 0\end{bmatrix}$ with respect to $E\otimes S^+\oplus E\otimes S^-$, where $S=S^+\oplus S^-$ is the spinor bundle over $X$.
The fundamental class $[D]\in K_0(X)$ is represented as the dual of $K$-theory by taking the Fredholm index of the twisted Dirac operators:
\[
K^0(X)\rightarrow\Z, \qquad [E]\mapsto \ind(D_E).
\]
The Fredholm index $\ind(D_E^+)=\ker\dim(D^+_E)-\ker\dim(D^-_E)$ can be calculated by the Atiyah-Singer index formula \cite{atiyah1968index3} via geometric method
\[
\ind(D_E^+)=\int_X\hat{A}(X)\ch(E),
\]
where $\hat{A}(X)$ is the $\hat{A}$-class of $X$ and $\ch(E)$ is the even Chern character of $E.$ When $X$ is a closed odd dimensional spin manifold, the $K$-theoretic Poincar\'e duality
\[
K^1(X)\rightarrow K_1(X), \qquad [g]\mapsto [T_g],
\] 
is to associate each $K^1$-representative $g: X\rightarrow U(n)$ the Toeplitz operator $T_g=PgP,$ where $P$ is the projection given by $\frac{1+D|D|^{-1}}{2}$ that maps to the nonnegative part of the spectrum of $D$.
The fundamental class $[D]\in K_1(X)$ is represented by its dual 
\[
K^1(X)\rightarrow\Z, \qquad [g]\mapsto \ind(T_g).
\]
With the parametrix given by $T_{g^{-1}}=Pg^{-1}P,$ the operator $T_g$ is indeed Fredholm. The index of the Toeplitz operator $T_g$ is equal to the spectral flow from $D$ to $g^{-1}Dg$ in the sense of Atiyah-Patodi-Singer (APS) \cite{atiyah1976spectral3}, see also \cite{booss2012elliptic}. On the other hand, Baum-Douglas \cite{baum1982toeplitz} computes the index and obtain
\[
\ind(T_g)=\int_X\hat{A}(X)\ch(g),
\]
where $\ch(g)$ is the odd Chern character in the sense of Getzler \cite{getzler1993odd}.

From a purely differential geometric perspective, the formula for index pairing is motivated to be generalised to the case of manifolds with boundary.
Let $X$ be a compact spin even dimensional manifold with boundary $Y$, and $E$ a complex vector bundle over $X$. 
Assume that the metric near the boundary is of product type and $D_E^+$ takes the form $c(\frac{d}{du})(\frac{d}{du}+D_{Y})$  near the boundary, where $d/du$ is the inward vector, $c(\cdot)$ is the Clifford action and $D_Y$ is the corresponding twisted Dirac operator on $Y$.
By imposing the APS boundary condition on the domain of $D^+$, that is, for a section $s$ in the Sobolev space $H^1(X, E\otimes S^+)$ of sections with derivatives up to order 1 in $L^2,$ the projection of $s|_{Y}$ to the nonnegative part of the spectrum of $D_{Y}$ vanishes. Then, $D^+_E$ is a Fredholm operator whose index is calculated by the APS index theorem \cite{atiyah1975spectral1}
\[
\ind(D_E^+)=\int_X\hat{A}(X)\ch(E)-\frac{\eta(D_{Y})+\dim\ker D_{Y}}{2},
\]
where $\eta(D_{Y})$ is the eta invariant of the self-adjoint operator $D_{Y}$. 

When $X$ is a compact spin odd-dimensional manifold with boundary $Y=\partial X$, in analogy to the Baum-Douglas Toeplitz index, a self-adjoint operator is needed in the construction. Let $P_{>0}$ be the orthogonal projection from $L^2(S\otimes E|_{Y})$ to the positive spectrum of $D_Y$, together with the choice of a Lagrangian subspace $\cL$ of $\ker D_Y$. Then, the twisted Dirac operator $D_E,$ endowed with the \textit{modified} boundary condition $P(\cL):=P_{>0}+P_{\cL},$ becomes a self-adjoint operator. Assume $\cL$ is fixed and write $P^\partial =P(\cL).$
The Toeplitz index is calculated by Douglas-Wojciechowski \cite{douglas1991adiabatic} when $g|_{\partial X}$ is trivial and by Dai-Zhang \cite[Theorem 2.3]{dai2006index} in general:
\[
\ind(T_g^E)= \int_X\hat{A}(X)\ch(E)\ch(g)-\overline{\eta}_{DZ}(Y, g)+\tau_{\mu}(g P^\partial g^{-1}, P^\partial, \cP_{X^-}),
\]
where $X^-$ corresponds to the part of the manifold with deleted cylinder, $\cP_{X^-}$ is the corresponding Calder\'on projection, $\overline{\eta}_{DZ}$ is the reduced Dai-Zhang $\eta$-invariant defined in \cite[Definition 2.2]{dai2006index}, and $\tau_{\mu}$ is the Maslov triple index in the sense of Kirk-Lesch \cite{kirk2000eta}. 

In this paper, we establish the equivariant version of the Dai-Zhang Toeplitz index theorem for an odd-dimensional manifold with even-dimensional boundary, following closely the method developed in \cite{dai2006index}. Let $H$ be a compact group of isometries of $X$ preserving its spin structure, as well as that of the boundary $Y=\partial X$. For each $h\in H$, 
we obtain an equivariant index formula 
\[
\ind_h(T_g^E)= \int_{X^h}\frac{\hat{A}(X^h)\ch_h(E)}{\det^{\frac12}(1-he^{R^N})}\ch_h(g)-\overline{\eta}_h(Y, g)+\tau^h_{\mu}(gP^\partial g^{-1}, P^\partial, \mathcal{P}_{X^-}),
\]
where $X^h$ is the fixed point submanifold of $X;$ $\ind_h$ is the equivariant analytic index \eqref{eq:equivind};  $\overline{\eta}_h(Y, g)$ is the reduced equivariant Dai-Zhang $\eta$-invariant; and $\tau^h_{\mu}$ is the equivariant Maslov triple developed in \cite[Section 6]{limwang2}. This is the main result (Theorem \ref{mainthm}) of our paper, which is a generalisation of the equivariant Toeplitz index theorem from closed manifolds (cf. \cite{fang2005equivariant}) to manifolds with boundary. The equivariant Toeplitz index is in fact given by the equivariant spectral flow from $D^E_{P^{\partial}}$ to $g^{-1}D^E_{gP^{\partial}g^{-1}}g$. 
We adopt the approach developed by Dai-Zhang in \cite{dai2006index} by introducing an intermediate operator $D^{\psi, g}$ which agrees with $D^E_{P^{\partial}}$ on the boundary and with $g^{-1}D^E_{gP^{\partial}g^{-1}}g$ on the interior. 
\begin{figure}
  \centering
  \includegraphics[width=\textwidth]{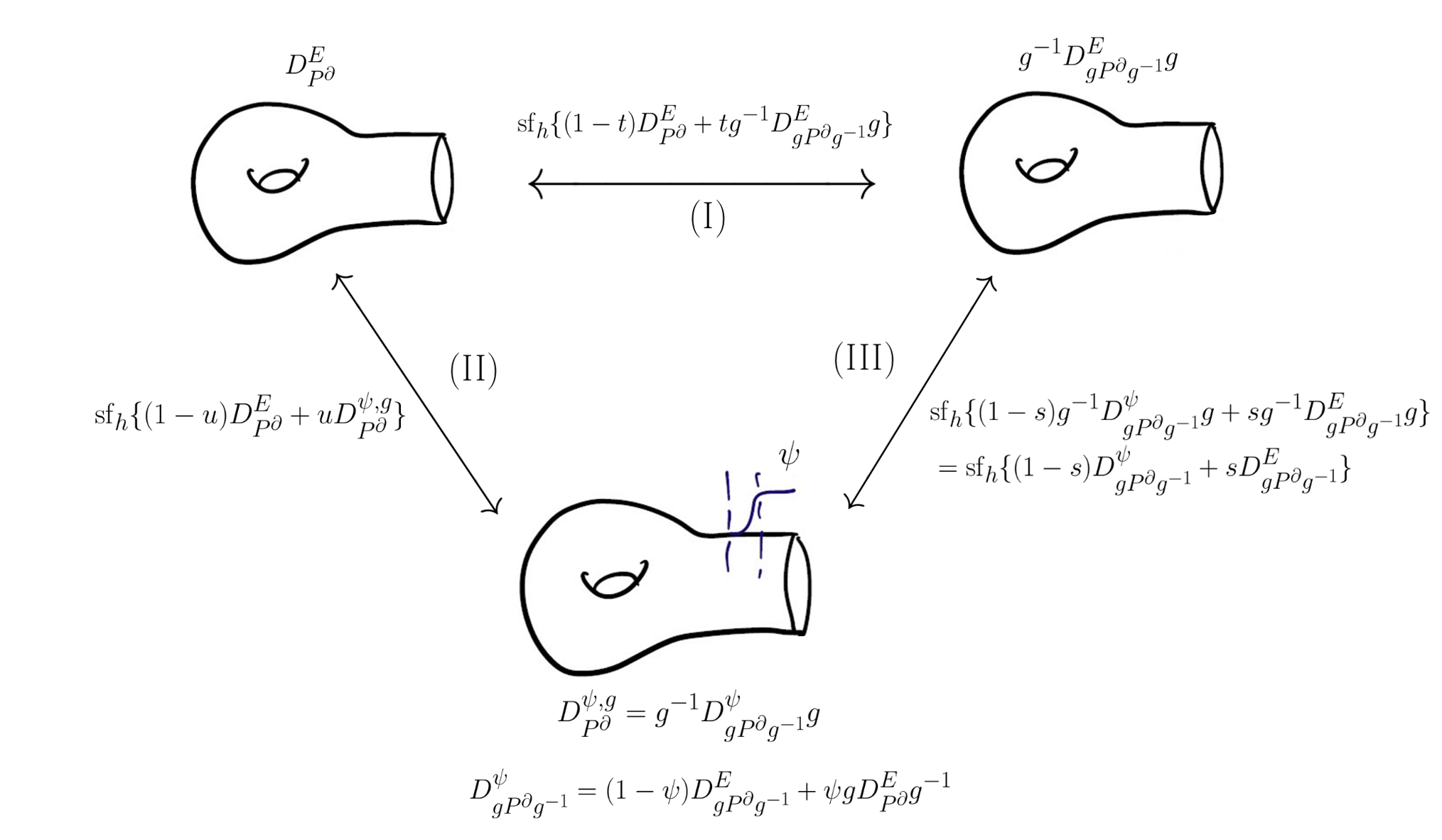}
 \caption{Decomposition of spectral flows}\label{Fugure1}
\end{figure}
The spectral flow (I) is then decomposed into sum of spectral flows (II) and (III), cf. Figure~\ref{Fugure1}, where the local index calculation comes from (II), and (III) contributes to the boundary correction terms. 
The main result is established on the crucial equality in Lemma~\ref{lem:diff.ind}, which asserts that the difference between the perturbed and the unperturbed equivariant Toeplitz indices is some equivariant spectral flow. The main framework for the proof of the equality is equivariant \textit{higher} spectral flow, see Remark~\ref{hsfremark}.

The organisation of this paper is as follows. In Section \ref{sec2}, we start by discussing the equivariant higher spectral flow introduced in \cite{liu2021equivariant}, which is a generalisation of \cite{dai1998higher}. Then, we list the setup and define the required equivariant Toeplitz operator. In Section \ref{sec3.1}, we compute the equivariant Toeplitz index in terms of the equivariant spectral flow. Then, we introduce the equivariant even-type Dai-Zhang $\eta$-invariant in Section \ref{sec3.2} and establish relevant results in Section \ref{sec4.1}. Lastly, we present the full proof of our claim in Section \ref{sec4.2}.

\vspace{0.5cm}
\textbf{Acknowledgement.} The first author would like to express his utmost gratitude to Universiti Sains Malaysia for support. Part of the work was done while the first author visited the Research Center for Operator Algebras at East China Normal University back in October 2019. He is grateful to the center for financial support. The second author acknowledges the support from Science and Technology Commission  of Shanghai Municipality (STCSM), grant No.18dz2271000. We thank Professors Weiping Zhang and Bo Liu for their pre-review suggestions. Lastly, we also appreciate the referees very much for their helpful technical suggestions for improvement.\\


\section{Equivariant higher spectral flow} 
\label{sec2}


Let $B$ be a compact manifold. Let $Z \to M \xrightarrow{\pi} B$ be a smooth fibration with the fiber $Z$ an odd-dimensional compact $\spin$ manifold (possibly with boundary). Let $H$ be a compact group of isometries of $B,$ where $h$ is an element of $H$ such that $H=\overline{\langle h \rangle}.$ For $t \in [0,1]$ and $b \in B,$ denote by $D_t=\{D_{b,t}\}$ a family of self-adjoint $H$-equivariant elliptic pseudodifferential operators on the fiber $Z$ and depends smoothly on $b \in B.$  In the following, we consider an $H$-equivariant analog of the classical spectral sections as defined in \cite[Definition 1]{melrose1997families,MR1484046} and \cite[Definition 1.2]{dai1998higher}. 

\begin{definition}\cite{liu2021equivariant}
Let $D=\{D_b\}_{b \in B}$ be a smooth family of first order elliptic pseudodifferential operators. An equivariant spectral section for $D$ is a continuous family of self-adjoint pseudodifferential projections $P=\{P_b\}_{b\in B}$ on the $L^2$-completion of the domain of $D$ for every $b\in B,$ which commutes with $h$ such that for some smooth functions $q:B \to \R$ and for every $b \in B$ we have
 \begin{equation}\label{eq:ss1}
 D_b u = \lambda u \implies \begin{cases}
 P_b u = u \quad \text{ if } \lambda > q(b) \\
 P_b u = 0  \quad \text{ if } \lambda < -q(b).
 \end{cases}
 \end{equation}
\end{definition}
This definition is a special case of \cite[Definition 2.2]{liu2021equivariant} for which an extra commutative condition is needed for even dimensional fibers. In \cite[Proposition 2.3(ii)]{liu2021equivariant}, Liu has shown that given two equivariant spectral sections $P$ and $Q,$ there exists an equivariant \textit{majorising} spectral section $R$ such that $PR=R$ and $QR=R.$  For every $b\in B$ and any such $R,$ $\mathrm{coker}\{P_bR_b: \im(R_b) \to \im(P_b)\}_{b \in B}$ forms an element $[R-P] \in K^0_H(B)$.  Moreover, the element $[P-Q]$ can be expressed as the virtual difference $[R-Q]- [R-P]$ which is independent of the choice $R.$ Then, given any equivariant spectral sections $P_1, P_2,$ and $P_3,$  we have the additive property 
\[
[P_3-P_1]=[P_3-P_2]+[P_2-P_1] \in K^0_H(B)
\] cf. \cite[\S 2.3]{liu2021equivariant}. In particular, \cite[Proposition 2.9(i)]{liu2021equivariant} asserts that every element of the equivariant $K$-theory can be written as the difference of classes comprising  pairs of equivariant spectral sections.

For the existence of equivariant spectral sections, we assume that the equivariant index bundle of $D_0$ vanishes, cf. \cite[Proposition 2.3(i)]{liu2021equivariant}. Then, by homotopy invariance, the rest of the equivariant index bundle for every $D_t$ also vanishes. Let $Q_0$ and $Q_1$ be equivariant spectral sections for $D_0$ and $D_1$ respectively. Let $\widetilde{D}=\{D_{b,t}\}_{b \in B, t \in I}$ be the equivariant total family parametrised by $B \times I.$ Then, there is a corresponding equivariant total spectral section $\widetilde{P}=\{P_{b,t}\}_{b \in B, t \in I}.$ Let 
\begin{equation} \label{eq:totalssres}
P_t=\widetilde{P}|_{B \times \{t\}}, \quad t \in I.
\end{equation}

\begin{definition} \label{higheqspdef1}
Let $P_t$ be as in \eqref{eq:totalssres}. The $H$-equivariant higher spectral flow of the pairs $(D_0,Q_0)$ and $(D_1,Q_1)$ is given by 
\begin{equation} \label{eq:higheqspdef1}
    \mathrm{sf}_H\{(D_0,Q_0),(D_1,Q_1)\} = [Q_1 -P_1] - [Q_0 -P_0] \in K^0_H(B).
\end{equation}
\end{definition}

Let $E \to M$ be an $H$-equivariant complex vector bundle. Let $\nabla^E$ be the $H$-invariant connection, i.e. the operator $\nabla^E$ from $\Gamma(M,E)$ to $\Omega^1(M,E)=\Gamma(M,\Lambda^1 T^*M \otimes E)$ commutes with the natural action of every element $h \in H$ on $\Omega^1(M,E).$ We also assume $E$ admits a Hermitian structure and $h$ preserves the Hermitian structure on $E.$ Assume $TZ\to M$ admits a spin structure and is fixed. Let $S(TZ)$ be the complex spinor bundle for $TZ$ and is assumed to commute with the $H$-actions. Let $\nabla^{S(TZ)}$ be the $H$-equivariant Hermitian connection on $S(TZ).$ Let $\nabla^{S(TZ)\otimes E}:= \nabla^{S(TZ)} \otimes 1 + 1 \otimes \nabla^E$ be the tensor product connection preserved by $H$-actions. For $b \in B,$ let $D^E_b: \Gamma((S(TZ) \otimes E)|_b) \to \Gamma((S(TZ) \otimes E)|_b)$ be the self-adjoint $H$-equivariant twisted Dirac operator defined by 
\begin{equation}\label{eq:twistedirac0}
D^E_b:=\sum_i c(e_i)\nabla^{S(TZ)\otimes E|_{b}}_{e_i}
\end{equation}
where $c$ is the Clifford action and $\{e_i\}$ is a local orthonormal frame of $TZ.$ 
Let $D^E=\{D^E_b\}_{b \in B}$ be the smooth family of the equivariant Dirac operators $D^E_b$ parametrised by $B.$ The following is a directly equivariant adaptation of \cite[Definition 1.6]{dai1998higher}. Compare with \cite[Definition 2.1]{liu2021equivariant}.

\begin{definition}
For $b \in B,$ let $D_b$ be an equivariant self-adjoint Dirac-type operators on $M.$ That is, for $b \in B,$ each $D_b$ is a first order self-adjoint $H$-equivariant differential operator having the same principal symbol as $D^E_b.$  The collection $D=\{D_b\}_{b \in B}$ is said to be a $B$-family of $H$-equivariant Dirac-type operators. 
\end{definition}

For our purposes, we shall equip the $B$-family $D$ of Dirac-type operators with some spectral sections. To ensure their existence, we make the following assumption, cf. \cite[Proposition 2.3]{liu2021equivariant}. See also \cite[Assumption 1.7]{dai1998higher} and \cite[Proposition 1]{melrose1997families}.

\begin{assumption} 
\label{assump1}
For the family $D^E=\{D^E_b\}_{b \in B},$ assume $\mathrm{Ind}_H(D^E)=0$ in $K^1_H(B).$ 
\end{assumption}

The following is the $H$-equivariant analog of \cite[Definition 1.8]{dai1998higher}.

\begin{definition}
Let $D$ be a $B$-family of $H$-equivariant Dirac-type operators. Define an $H$-equivariant \textit{generalised} spectral section $Q=\{Q_b\}_{b \in B},$ one for every $D_b,$ to be a continuous family of self-adjoint zeroth order pseudodifferential projections whose principal symbol coincides with that of an $H$-equivariant spectral section $P=\{P_b\}_{b \in B}$ for $D^E=\{D^E_b\}_{b \in B}.$
\end{definition}

Let $Q_1$ and $Q_2$ be two equivariant generalised spectral sections of $D.$ Consider the composition of spectral sections
\begin{equation}
    Q_{2,b}Q_{1,b} : \im (Q_{1,b}) \to \im(Q_{2,b}), \quad \forall\: b \in B. 
\end{equation} Then, $Q_2Q_1=\{Q_{2,b}Q_{1,b}\}_{b \in B}$ defines a continuous family of equivariant Fredholm operators over $B.$ By \cite{atiyah1971index4} and \cite{segal1968equivariant}, the analytic family Fredholm index of $Q_2Q_1$ coincides with the topological index of the equivariant vector bundle defined by the spectral sections $Q_1$ and $Q_2:$
\begin{equation} \label{eq:indbundle1}
    \ind(Q_2Q_1)=[Q_1-Q_2] \in K^0_{H}(B).
\end{equation}

Suppose $Q_1$ and $Q_2$ are homotopic, i.e. there is a continuous path of equivariant generalised spectral sections $\widetilde{Q}_s$ with $\widetilde{Q}_0=Q_1$ and $\widetilde{Q}_1=Q_2$ for $s \in [0,1].$ Let $Q_3$ be another equivariant generalised spectral section of $D,$ then by the homotopy invariance of the equivariant family index, we have $[Q_1-Q_3]=[Q_2-Q_3].$ On the other hand, the additivity of equivariant index bundles formed via equivariant generalised spectral sections still holds in $K^0_H(B)$: $[Q_1-Q_2]+[Q_2-Q_3]=[Q_1-Q_3].$ 
Let $\widetilde{R}_t$ and $\widetilde{R}'_t$ be two equivariant generalised spectral sections of the equivariant total family $\widetilde{D}$ parametrised by  $B \times I.$ Then, for $t \in I,$ we have a continuous path of equivariant Fredholm operators $R'_tR_t$ where $R_t=\tilde{R}|_{B \times \{t\}}$ and $R'_t=\tilde{R}'|_{B \times \{t\}}.$ By the homotopy invariance of the equivariant family index,
\begin{equation}\label{eq:homotopyeqsp1}
    [R_1-R'_1]=\ind(R'_1R_1)=\ind(R'_0R_0)=[R_0-R'_0],
\end{equation} which shows that the equivariant family index \eqref{eq:indbundle1} is independent of the choice of generalised spectral sections. In the following, we obtain a slight generalisation of \eqref{eq:higheqspdef1}.

\begin{proposition}
The $H$-equivariant higher spectral flow of the pairs $(D_0,Q_0)$ and $(D_1,Q_1)$ can also be computed via an equivariant generalised total spectral section $\tilde{R}$ of the equivariant total family $\widetilde{D}:$
\begin{equation}\label{eq:higheqspdef2}
    \mathrm{sf}_H\{(D_0,Q_0),(D_1,Q_1)\} = [Q_1 -R_1] - [Q_0 -R_0] \in K^0_H(B).
    \end{equation}
\end{proposition}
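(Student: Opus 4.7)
The plan is to reduce the claim to Definition~\ref{higheqspdef1} by combining the additivity of equivariant index bundles with the homotopy invariance of the equivariant family index, which was proved immediately before the statement.

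Let $\widetilde{P}$ be the equivariant total spectral section of $\widetilde{D}$ used in \eqref{eq:higheqspdef1}, and let $P_t = \widetilde{P}|_{B \times \{t\}}$ as in \eqref{eq:totalssres}. Since any genuine spectral section is in particular a generalised spectral section, both $[Q_i - R_i]$ and $[Q_i - P_i]$ are well-defined classes in $K^0_H(B)$ via the family index identification \eqref{eq:indbundle1}. By the additivity of equivariant index bundles formed via generalised spectral sections, for each $i \in \{0,1\}$ we have
\[
[Q_i - R_i] = [Q_i - P_i] + [P_i - R_i].
\]
Subtracting the $i = 0$ identity from the $i = 1$ identity yields
\[
[Q_1 - R_1] - [Q_0 - R_0] \;=\; \bigl([Q_1 - P_1] - [Q_0 - P_0]\bigr) + \bigl([P_1 - R_1] - [P_0 - R_0]\bigr).
\]
The first parenthesis is, by Definition~\ref{higheqspdef1}, the equivariant higher spectral flow $\mathrm{sf}_h\{(D_0,Q_0),(D_1,Q_1)\}$, so it remains only to show that the second parenthesis vanishes.

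For this I would apply the homotopy invariance statement \eqref{eq:homotopyeqsp1} to the pair $\widetilde{P}$ and $\widetilde{R}$, regarded as two equivariant generalised total spectral sections of $\widetilde{D}$ on $B \times I$. Since $I$ is connected and the family $R_tP_t$ of equivariant Fredholm operators depends continuously on $t$, the family index is constant in $t$, giving
\[
[P_1 - R_1] \;=\; \ind(R_1 P_1) \;=\; \ind(R_0 P_0) \;=\; [P_0 - R_0],
\]
so the second parenthesis is zero and the identity \eqref{eq:higheqspdef2} follows.

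The only real subtlety I foresee is justifying the additivity formula in the mixed setting where one of the three projections is a genuine spectral section and the other two are only generalised; this is handled by treating $P_i$ as a generalised spectral section and invoking \eqref{eq:indbundle1} together with the additivity property already established for generalised spectral sections. Everything else is a direct combination of these structural facts, so no new analytic input is needed.
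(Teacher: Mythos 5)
Your proof is correct, and it uses precisely the two structural ingredients (additivity of equivariant index bundles for generalised spectral sections, and the homotopy invariance \eqref{eq:homotopyeqsp1}) that the paper establishes in the paragraphs immediately preceding the Proposition and then states without spelling out a formal proof. The decomposition $[Q_i - R_i] = [Q_i - P_i] + [P_i - R_i]$ followed by cancellation of $[P_1 - R_1] - [P_0 - R_0]$ via the homotopy invariance of the family index of $R_tP_t$ is exactly the intended argument, so there is nothing to add.
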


This proposition can be seen as the equivariant analog of \cite[Theorem 1.11]{dai1998higher}.

Let $g$ be a $K^1_H$-representative of $M,$ cf. \cite[Lemma 1.4]{liu2021equivariant}. Consider the conjugation
\[
D_{g,b}:=g_b D_b g^{-1}_b,\quad b \in B.
\] Here, $g_b$ is interpreted as an equivariant map $M 
\to U(N)$ with trivial $H$-actions on $U(N)$, which extends to an operator from $C^\infty(S(TZ_b) \otimes E_b \otimes \C^N|_{Z_b})$ to itself by acting on $C^\infty(S(TZ_b) \otimes E_b)$ as identity. Similarly, for $b \in B,$ the $H$-action on the bundle $S(TZ_b) \otimes E_b \otimes \C^N|_{Z_b}$ for all $b \in B$ defines a map on the space of sections. Let $D(t)=\{D_b(t)\}_{ t \in [0,1], b \in B}$ be a path of $B$-families of self-adjoint equivariant Dirac-type operators  where  $D_b(t)$ is given by
\[
D_b(t)=(1-t)D_b + tD_{g,b}, \quad t \in [0,1].
\] 
On the other hand, we have for every $b\in B$ an associated $H$-equivariant spectral section $P_b$ for $D_b$ and  $g_b P_b g^{-1}_b$ for $g_b D_b g^{-1}_b.$ Hence, we obtain a family of equivariant spectral sections $gPg^{-1}=\{g_b P_b g^{-1}_b\}_{b \in B}$ of $gDg^{-1}.$ Then, its associated equivariant higher spectral flow is given by
\begin{equation}\label{eq:higheqspdef4}
 \mathrm{sf}_H\{(D,P),(gDg^{-1},gPg^{-1})\} \in  K^0_H(B).   
\end{equation}

The equivariant higher spectral flow \eqref{eq:higheqspdef4} only depends on the symbol of $D.$ To see this, consider another $B$-family $D'$ of equivariant self-adjoint Dirac-type operators having the same symbol as the family $D.$ Let $P'$ be any spectral section of $D'.$ Choose the respective majorising spectral sections $R$ and $gRg^{-1}$ for both $P,P'$ and $gPg^{-1}, gP'g^{-1},$ then 
\begin{align*}
\mathrm{sf}_H \{(D,P)&,(gDg^{-1},gPg^{-1})\} -\mathrm{sf}_h \{(D',P'),(gD'g^{-1},gP'g^{-1})\} \\
&=([gPg^{-1}-gRg^{-1}]-[P-R]) - ([gP'g^{-1}-gRg^{-1}]-[P'-R]) \\
&=[gPg^{-1}-gP'g^{-1}] -[P-P']=0 \in K^0_H(B),
\end{align*} where the last equality follows from the computation \[
[gPg^{-1}-gP'g^{-1}]=g[P-P']g^{-1}=[P-P']
\] in $K^0_H(B)$ as $g$ is homotopic equivariantly to the identity $\id$ via unitary operators. In other words, \eqref{eq:higheqspdef4} is independent of the choice of the equivariant spectral section $P.$

\begin{remark}\label{hsfremark}
The notion and properties of equivariant higher spectral flow are recalled above as they serve as a general framework for proving Lemma~\ref{indspecf}-\ref{lem:diff.ind} in the special case $B=pt.$ These lemmata relate the equivariant spectral flow with equivariant Toeplitz index. In particular, Lemma~\ref{lem:diff.ind} is crucial in establishing the main result Theorem \ref{mainthm}. 
\end{remark}


\section{Elements of equivariant Dai-Zhang Toeplitz index theorem}
\label{sec3}


Let $M$ be an odd-dimensional compact oriented $\spin$ manifold with boundary $\partial M.$ Let $G$ be a compact group of isometries acting on $M.$ Fix an $h \in G$ and consider the closed subgroup $H$ of $G$ generated by $h$, i.e. $h \in H=\overline{\langle h \rangle}.$ In the following, we consider the case of $H$-actions on $M$ which also preserving $\partial M.$ Assume the $\spin$ structure on $M$ is $H$-equivariant and fixed. Then, the induced spin structure on the boundary $\partial M$ is also $H$-equivariant.
Let $E$ be an $H$-equivariant Hermitian vector bundle over $M,$ equipped with an $H$-invariant Hermitian connection $\nabla^E.$
Let $D^E$ be the $H$-equivariant twisted Dirac operator on $M$ defined similar to \eqref{eq:twistedirac0} (take $B=pt,$ and the $H$-equivariant spinor bundle is $S=S(TM)$).
Denote by $P_{>0}$ the orthogonal projection from $L^2(S\otimes E|_{\partial M})$ to the positive eigenspace of the Dirac operator $D^E_{\partial M}$ on $\partial M.$
Choose an $H$-equivariant Lagrangian subspace $\cL$ of $\mathrm{ker} (D^E_{\partial M}).$ (For its existence, see Appendix $A$.) Let $P_{\cL}$ be the corresponding orthogonal projection defined via kernel.
Then, we define the modified projection by 
\begin{equation}\label{eq:modproj1}
P^{\partial}(\cL):=P_{>0}+P_{\cL}.
\end{equation}
Henceforth, we assume $\cL$ is chosen and fixed, so we will denote this by $P^\partial := P^\partial(\cL)$ for simplicity. The association of $D^E$ with  $P^{\partial}$, in which we will denote by $D^E_{P^{\partial}}$ henceforth, is an $H$-equivariant self-adjoint first order elliptic differential operator having point spectrum with $\infty$ as a cluster point and with finite dimensional eigenspaces. The $H$-equivariance assumption on $D^E$ implies that $P^\partial$ is also $H$-equivariant.

Let $g: M\rightarrow U(n)$ be a continuous $H$-equivariant map for which $H$ acts trivially on $U(n)$. Let $M \times \C^n,$ or simply $\C^n,$ be the trivial $H$-equivariant complex vector bundle of rank $n$ over $M,$ endowed with the trivial Hermitian metric and Hermitian connection. Then, $g$ extends to act on $S \otimes E$ by identity and as a smooth automorphism on $\C^n.$ Moreover, $g$ is assumed to be of product structure over the cylinder $[0,1] \times \partial M \subset M,$ i.e. $g|_{[0,1] \times \partial M}:= \pi^*(g|_{\partial M})$ with the natural projection $\pi:[0,1] \times \partial M \to \partial M.$ Let $L^{2}(S\otimes E \otimes \C^n)$ be the space of $L^2$-sections of $S \otimes E \otimes \C^n$ and  $L^{2,+}_{P^{\partial}}(S\otimes E \otimes \C^n)$ be the direct sum of all non-negative eigenspaces of $D^E_{P^{\partial}}.$  Let 
\begin{equation}\label{eq:orgproj1}
P_{P^{\partial}} : L^{2}(S\otimes E \otimes \C^n) \to L^{2,+}_{P^{\partial}}(S\otimes E \otimes \C^n)
\end{equation}
be the orthogonal projection. Denote by $D^E_{gP^\partial g^{-1}}$ the twisted Dirac operator $D^E$ equipped with the conjugated boundary condition $gP^\partial g^{-1}.$ Let  $L^{2,+}_{gP^{\partial}g^{-1}}(S\otimes E)$ be the direct sum of all non-negative eigenspaces of $D^E_{gP^{\partial}g^{-1}}.$  Let 
\begin{equation}\label{eq:orgproj2}
P_{gP^{\partial}g^{-1}} : L^{2}(S\otimes E \otimes \C^n) \to L^{2,+}_{gP^{\partial}g^{-1}}(S\otimes E \otimes \C^n)
\end{equation}
be the orthogonal projection.

\begin{definition}
Define the Toeplitz operator by 
\begin{equation}\label{eq:toepop1}
T^E_g:=P_{gP^{\partial}g^{-1}}\circ g\circ P_{P^{\partial}}: L^{2,+}_{P^{\partial}}(S\otimes E\otimes\C^n)\rightarrow L^{2,+}_{gP^{\partial}g^{-1}}(S\otimes E\otimes\C^n).
\end{equation} It is an $H$-equivariant Fredholm operator, as the equivariant analog of \cite[Equation (2.7)]{dai2006index}.
\end{definition}


\subsection{Equivariant Toeplitz index and equivariant spectral flow}
\label{sec3.1}


The main aim of this paper is to find integral formula  of the equivariant index of the Toeplitz operator $T^E_g$ at $h\in H$ defined by
\begin{equation}\label{eq:equivind}
\mathrm{Ind}_h(T^E_g):=\mathrm{Tr}(h|_{\mathrm{ker}(T^E_g)})-\mathrm{Tr}(h|_{\mathrm{coker}(T^E_g)}) \: \in \C,
\end{equation}
serving as an odd analog of Donnelly's result in \cite{donnelly1978eta}.
This number is the character value at $h$ of the equivariant index
\[ 
\mathrm{Ind}_H(T_g^E):=[\mathrm{ker}(T^E_g)]-[\mathrm{coker}(T^E_g)]\in R(H). 
\]
In fact, given a finite dimensional representation $(\pi, V_{\pi})$ of $H$, let $\chi_h(x)=\mathrm{Tr}(h|_{V_{\pi}}).$ Then $\mathrm{Ind}_h(T^E_g)=\chi_h(\mathrm{Ind}_H(T^E_g)).$

Consider the path connecting the Dirac operators $(D^E,P^{\partial})$ and $(g^{-1} D^Eg,P^{\partial}),$ i.e. 
\begin{equation} \label{eq:path1}
(1-t)D^E_{P^{\partial}}+tg^{-1}D^E_{gP^\partial g^{-1}}g\qquad 0\le t\le 1.  
\end{equation}
Note that the second term follows from 
\begin{equation} \label{eq:bdryconjcond}
(g^{-1}D^E g, P^\partial) = g^{-1}(D^E, g P^\partial g^{-1}) g = g^{-1}D^E_{g P^\partial g^{-1}} g.
\end{equation} 
That is, the left hand side of \eqref{eq:bdryconjcond} denotes the conjugated operator $g^{-1}D^Eg$ equipped with the boundary projection $P^\partial,$ whilst the right side denotes the conjugation of the operator $D^E_{g P^\partial g^{-1}}$ (equipped with the boundary condition $g P^\partial g^{-1}$) by $g.$
For the equivariant spectral flow 
\[
\mathrm{sf}_H\{(1-t)D^E_{P^{\partial}}+tg^{-1}D^E_{gP^\partial g^{-1}}g\}\in R(H)
\]
apply the character value at $h$ as above, and we obtain 
\[
\mathrm{sf}_h\{(1-t)D^E_{P^{\partial}}+tg^{-1}D^E_{gP^\partial g^{-1}}g\}:=\chi_h[\mathrm{sf}_H\{(1-t)D^E_{P^{\partial}}+tg^{-1}D^E_{gP^\partial g^{-1}}g\}]\in \C.
\]

\begin{lemma} 
\label{indspecf}
For $h \in H,$ the following equality holds in $\C$:
\begin{equation} \label{eq:indspecf}
   \mathrm{Ind}_h(T^E_g)= -\mathrm{sf}_h \{(1-t)D^E_{P^{\partial}}+tg^{-1}D^E_{gP^\partial g^{-1}}g\}.
\end{equation}
\end{lemma}

\begin{proof}
By taking $Q_1= P_{P^\partial}$ and $Q_2=g^{-1}P_{gP^\partial g^{-1}}g,$ 
and apply \eqref{eq:indbundle1} for $B=pt$ and using the fact that  $\ind_H(Q_2Q_1)=\ind_H(gQ_2Q_1),$ we obtain
\begin{equation}\label{eq:indspecf1}
\mathrm{Ind}_H(T^E_g)=[P_{P^{\partial}}-g^{-1}P_{gP^\partial g^{-1}}g] = -[g^{-1}P_{gP^\partial g^{-1}}g-P_{P^{\partial}}] \in R(H).
\end{equation} 
On the other hand, by considering the trivial $H$-equivariant paths $Q_t \equiv P_{P^{\partial}}$ and $g^{-1}Q_tg$ for all $t,$ we see that the right side of \eqref{eq:indspecf1} coincides with 
\begin{equation}\label{eq:indspecf2}
\mathrm{sf}_H\{D^E_{P^\partial}, g^{-1}D^E_{gP^\partial g^{-1}} g\}
= [g^{-1}P_{gP^\partial g^{-1}}g-P_{P^{\partial}}] - [P_{P^{\partial}}-P_{P^{\partial}}]
= [g^{-1}P_{gP^\partial g^{-1}}g-P_{P^{\partial}}]. 
\end{equation} 
Here, $\mathrm{sf}_H\{D^E_{P^\partial}, g^{-1}D^E_{gP^\partial g^{-1}} g\}=\mathrm{sf}_H \{(1-t)D^E_{P^{\partial}}+tg^{-1}D^E_{gP^\partial g^{-1}}g\}.$ Thus, combining \eqref{eq:indspecf1} and \eqref{eq:indspecf2}, together with taking $\chi_h$ on both of these equations, we obtain the desired result.
\end{proof}

Note that the two operators $g^{-1}D^E_{gP^{\partial}g^{-1}} g$ and $D^E_{P^\partial}$ are different on both the interior and the boundary of $M.$
The analysis becomes much more involved and complicated.
In \cite{dai2006index} Dai and Zhang avoided this problem by introducing an intermediate operator
\begin{equation}\label{eq:Dpsig1}
D^{\psi}_{gP^{\partial}g^{-1}}:=(1-\psi)D^E_{gP^{\partial}g^{-1}}+\psi gD^E_{P^{\partial}} g^{-1}
\end{equation}
where $\psi$ is a fixed smooth cutoff function on $M$ being $1$ in the $\epsilon$-neighbourhood of $\partial M$ and $0$ outside $2\epsilon$ neighbourhood of $\partial M.$ In our case, $D^{\psi}_{gP^{\partial}g^{-1}}$ is $H$-equivariant.
The spectral flow $-\mathrm{sf}_h\{(1-t)D^E_{P^{\partial}}+tg^{-1}D^E_{g P^\partial g^{-1}}g\}$ is then decomposed into the sum of two spectral flows:
\begin{equation}
\label{eq:sf1}
-\mathrm{sf}_h\{(1-u)D^E_{P^{\partial}}+u g^{-1}D^{\psi}_{gP^{\partial}g^{-1}}g\}, \quad 0\le u\le 1,
\end{equation}
and
\begin{equation}
\label{eq:sf2}
-\mathrm{sf}_h\{(1-s)g^{-1}D^{\psi}_{gP^{\partial}g^{-1}}g + s g^{-1}D^E_{g P^\partial g^{-1}} g\}, \quad 0\le s\le 1.
\end{equation}
Note that the operators at the ending points of the path in the spectral flow~(\ref{eq:sf1}) coincide on the boundary but different on the interior.

Next, we express the equivariant spectral flow (\ref{eq:sf1}) as the equivariant index of some perturbed $H$-equivariant Toeplitz operator. Following the notation as before, let $L^{2,+,\psi}_{gP^{\partial}g^{-1}}(S\otimes E \otimes \C^n)$ be the non-negative eigenspaces of the $H$-equivariant twisted Dirac operator $D^{\psi}_{gP^{\partial}g^{-1}}$ and denote by $P^{\psi}_{gP^{\partial}g^{-1}}$  the corresponding orthogonal projection. Let $P_{P^\partial}$ be as in \eqref{eq:orgproj1}.
Then, we define the $H$-equivariant perturbed Toeplitz operator by 
\begin{equation}\label{eq:toepop2}
T_{g, \psi}^E:=P^{\psi}_{gP^{\partial}g^{-1}}\circ g\circ P_{P^{\partial}}: L^{2, +}_{P^{\partial}}(S\otimes E\otimes\C^n)\rightarrow L^{2, +}_{gP^{\partial}g^{-1}}(S\otimes E\otimes\C^n).
\end{equation}
This is the equivariant analog of \cite[Equation (2.9)]{dai2006index}.

\begin{lemma}
\label{lem:Toepsisf}
For $h \in H,$ the following equality holds in $\C$:
\begin{equation}
\label{eq:Toepsisf}
\mathrm{Ind}_h(T^E_{g, \psi})=-\mathrm{sf}_h\{(1-u)D^E_{P^{\partial}}+u g^{-1}D^{\psi}_{gP^{\partial}g^{-1}}g\}.
\end{equation}
\end{lemma}

\begin{proof}

Consider $\mathrm{Ind}_H(T_{g, \psi}^E)=\mathrm{Ind}_H(g^{-1}T_{g, \psi}^E)$ together with \eqref{eq:indbundle1}, then by taking $Q_u:=(1-u)P_{P^{\partial}}+ug^{-1}P^{\psi}_{gP^{\partial}g^{-1}}g$ and $P_u \equiv P_{P^\partial},$  the desired equality \eqref{eq:Toepsisf} follows from the same approach in the proof of Lemma \ref{indspecf}. 
 
\end{proof}
The second spectral flow (\ref{eq:sf2}) is stable under the conjugation of $g$. By reversing the direction of the path, (\ref{eq:sf2}) can be rewritten as 
\begin{equation}
-\mathrm{sf}_h\{(1-s)g^{-1}D^{\psi}_{gP^{\partial}g^{-1}}g + s g^{-1}D^E_{g P^\partial g^{-1}} g\} 
=\mathrm{sf}_h\{(1-s)D^E_{g P^\partial g^{-1}}  + s D^{\psi}_{gP^{\partial}g^{-1}} \}
\end{equation}
for $s \in [0,1]$ and $h \in H.$  Set
\begin{equation} \label{eq:Dpsig2}
D^\psi_{g P^\partial g^{-1}}(s):=(1-s) D^E_{g P^\partial g^{-1}}+sD^{\psi}_{gP^{\partial}g^{-1}}, \quad 0\le s\le 1.
\end{equation}
Then, we obtain the following lemma, which  essentially follows from the additivity of equivariant spectral flow, see \cite{limwang2}.

\begin{lemma} 
Let $T^E_{g}$ be the Toeplitz operator \eqref{eq:toepop1} and $T_{g, \psi}^E$ be the perturbed Toeplitz operator \eqref{eq:toepop2}. Then, for $h \in H,$
\label{lem:diff.ind}
\[
\mathrm{Ind}_h(T^E_g) -\mathrm{Ind}_h(T^E_{g,\psi}) =\mathrm{sf}_h\{ D^\psi_{g P^\partial g^{-1}}(s)\}_{s \in [0,1]}.
\]
\end{lemma}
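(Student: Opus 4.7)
The plan is to combine Proposition~\ref{indspecf} with Lemma~\ref{lem:Toepsisf}, and then use the additivity of equivariant spectral flow under concatenation of paths, which is the content of the decomposition already spelled out in the discussion preceding~\eqref{eq:sf1} and~\eqref{eq:sf2}. Concretely, I would start by rewriting the two Toeplitz indices on the left-hand side as equivariant higher spectral flows:
\begin{align*}
\mathrm{Ind}_h(T^E_g)
&= -\mathrm{sf}_h\{(1-t)D^E_{P^{\partial}}+t\,g^{-1}D^E_{gP^\partial g^{-1}}g\}_{t\in[0,1]}, \\
\mathrm{Ind}_h(T^E_{g,\psi})
&= -\mathrm{sf}_h\{(1-u)D^E_{P^{\partial}}+u\,g^{-1}D^{\psi}_{gP^{\partial}g^{-1}}g\}_{u\in[0,1]}.
\end{align*}

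Next, I would splice in the intermediate operator $g^{-1}D^{\psi}_{gP^{\partial}g^{-1}}g$ (cf.~\eqref{eq:Dpsig1}) at the midpoint of the straight-line path appearing in the first line. The path from $D^E_{P^{\partial}}$ to $g^{-1}D^E_{gP^\partial g^{-1}}g$ is then homotopic (through a family with fixed endpoints) to the concatenation of the path from $D^E_{P^{\partial}}$ to $g^{-1}D^{\psi}_{gP^{\partial}g^{-1}}g$ followed by the path from $g^{-1}D^{\psi}_{gP^{\partial}g^{-1}}g$ to $g^{-1}D^E_{gP^\partial g^{-1}}g$. Invoking the additivity of the equivariant higher spectral flow from \cite{limwang2}, this gives
\[
\mathrm{Ind}_h(T^E_g) = \mathrm{Ind}_h(T^E_{g,\psi}) - \mathrm{sf}_h\bigl\{(1-s)\,g^{-1}D^{\psi}_{gP^{\partial}g^{-1}}g + s\,g^{-1}D^E_{gP^\partial g^{-1}}g\bigr\}_{s\in[0,1]}.
\]

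Finally, I would simplify the remaining spectral flow using two standard properties: invariance of $\mathrm{sf}_h$ under the global conjugation by the $H$-equivariant unitary $g$, and the sign change under reversal of the parameter. Conjugation removes the outer $g^{-1}(\cdot)g$, turning the path into $(1-s)D^{\psi}_{gP^{\partial}g^{-1}} + s\,D^E_{gP^\partial g^{-1}}$, and reversing $s\mapsto 1-s$ negates the spectral flow and yields exactly $D^\psi_{gP^\partial g^{-1}}(s)$ from~\eqref{eq:Dpsig2}. The two sign flips combine to give
\[
-\mathrm{sf}_h\bigl\{(1-s)\,g^{-1}D^{\psi}_{gP^{\partial}g^{-1}}g + s\,g^{-1}D^E_{gP^\partial g^{-1}}g\bigr\}
= \mathrm{sf}_h\{D^\psi_{gP^\partial g^{-1}}(s)\}_{s\in[0,1]},
\]
and rearranging produces the claimed identity.

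The only nontrivial ingredient is the additivity of the equivariant higher spectral flow along a concatenation of paths with a common intermediate endpoint; conjugation invariance and path-reversal are essentially formal at the level of Definition~\ref{higheqspdef1}, as they can be traced through the $K^0_H(B)$-classes $[Q_1-P_1]-[Q_0-P_0]$. I would therefore expect the main (small) obstacle to be citing the right version of additivity from \cite{limwang2}, since here the path is not a loop and one must be careful that the intermediate family $g^{-1}D^{\psi}_{gP^{\partial}g^{-1}}g$ is itself an admissible $H$-equivariant self-adjoint Dirac-type family admitting an equivariant spectral section; this follows from the construction of $D^{\psi}_{gP^{\partial}g^{-1}}$ via the cutoff $\psi$ and the $H$-invariance of $P^{\partial}$ already established.
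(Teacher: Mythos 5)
Your proposal is correct and takes essentially the same route as the paper. The paper's own proof also starts from the index-bundle expressions supplied by the proofs of Proposition~\ref{indspecf} and Lemma~\ref{lem:Toepsisf}, namely $\mathrm{Ind}_h(T^E_g)=[P_{P^\partial}-P^g_{gP^\partial g^{-1}}]$ and $\mathrm{Ind}_h(T^E_{g,\psi})=[P_{P^\partial}-P^{g,\psi}_{gP^\partial g^{-1}}]$, and subtracts directly using the additivity of the $K^0_H$-classes $[Q_1-Q_2]$ to land on $[P^{g,\psi}_{gP^\partial g^{-1}}-P^g_{gP^\partial g^{-1}}]$, which is then read off as the spectral flow $\mathrm{sf}_h\{D^\psi_{gP^\partial g^{-1}}(s)\}$ after conjugation by $g$; your version reaches the same point by phrasing the additivity at the level of concatenated paths of spectral flows rather than at the level of $K$-classes, and both frames are interchangeable here. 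The one small thing to be tidy about: your intermediate step requires the homotopy (rel endpoints) between the straight-line path and the spliced two-leg path, which is immediate since the space of self-adjoint Dirac-type operators with fixed principal symbol is affine, hence convex; the paper sidesteps this by never passing through a homotopy, working purely additively with the projection classes. Your sign bookkeeping and the invocation of conjugation invariance plus path reversal to recover $D^\psi_{gP^\partial g^{-1}}(s)$ match equation~\eqref{eq:Dpsig2}.
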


\begin{proof} 
 By using the approach in the proof of Lemma \ref{indspecf}, we have
	\begin{equation}\label{eq:inddiff1}
	\ind_H(T^E_g)=\ind_H (P_{gP^\partial g^{-1}}gP_{P^\partial})=\ind_H (g^{-1}P_{gP^\partial g^{-1}}gP_{P^\partial})=\ind_H (P^g_{gP^\partial g^{-1}}P_{P^\partial})
	\end{equation} where $P^g_{gP^\partial g^{-1}}=g^{-1}P_{gP^\partial g^{-1}}g : L^2(S\otimes E \otimes \C^n) \to L^{2,+}_{gP^\partial g^{-1}}(S\otimes E \otimes C^n)$ is the equivariant orthogonal projection onto the space of the sum of eigenspaces with respect to the non-negative eigenvalues of the equivariant Dirac operator $D^E_{gP^\partial g^{-1}}.$ On the other hand, the equivariant index of the perturbed Toeplitz operator has a similar expression
	\begin{equation}\label{eq:pertinddiff1}
	\ind_H(T^E_{g,\psi})=\ind_H (P^{g,\psi}_{gP^\partial g^{-1}}P_{P^\partial})
	\end{equation} where $P^{g,\psi}_{gP^\partial g^{-1}}$ is an orthogonal projection from  $L^2(S\otimes E \otimes \C^n) $ to $L^{2,+,\psi}_{gP^\partial g^{-1}}(S\otimes E \otimes C^n).$ The difference \eqref{eq:inddiff1}-\eqref{eq:pertinddiff1} can be computed as follows.
	\begin{align*}
	\ind_H T^E_g- \ind_H T^E_{g,\psi}
	&= [P_{P^\partial}- P^g_{gP^\partial g^{-1}}] - [P_{P^\partial}- P^{g,\psi}_{gP^\partial g^{-1}}] 
	= [P^{g,\psi}_{gP^\partial g^{-1}}- P^g_{gP^\partial g^{-1}}] \\
	&= \mathrm{sf}_H\{g^{-1}D^E_{g P^\partial g^{-1}}g, g^{-1}D^{\psi}_{g P^\partial g^{-1}}g\} 
	= \mathrm{sf}_H\{ D^E_{g P^\partial g^{-1}}, D^{\psi}_{g P^\partial g^{-1}}\} \\
	&= \mathrm{sf}_H \{ D^\psi_{g P^\partial g^{-1}}(s)\}_{s \in [0,1]}.
	\end{align*} 
Then, the desired result follows by taking $\chi_h$ for all three terms $\ind_H T^E_g, \ind_H T^E_{g,\psi},$ and $\mathrm{sf}_H \{ D^\psi_{g P^\partial g^{-1}}(s)\}_{s \in [0,1]}$ respectively. 
\end{proof}


\subsection{Equivariant Dai-Zhang $\eta$-invariant}
\label{sec3.2}


The equivariant APS $\eta$-invariants are classically known and well-studied, see for example Donnelly's original definition \cite[pg 892]{donnelly1978eta}, Zhang \cite[Equation (0.1)]{zhang90equiveta},  Fang \cite[\S 3]{fang2005equivariant}, and Lim-Wang \cite{limwang2}.
For $h \in H$ and Re$(s) \gg 0,$ the $h$-equivariant $\eta$-function of a self-adjoint elliptic equivariant differential operator $D$ is defined by
\begin{equation} 
\label{eq:equiveta}
    \eta_h(D,s):=\sum_{\lambda \neq 0, \lambda \in \text{spec}(D)\backslash \{0\}} \tr (h^*_\lambda)\frac{\text{sgn}(\lambda)}{|\lambda|^s}
\end{equation} where $h^*_\lambda$ is the induced linear map on the $\lambda$-eigenspace.
Define $h$-equivariant $\eta$-invariant of $D$ by $\eta_h(D):= \eta_h(D,0),$ and its reduced version by 
\begin{equation} \label{eq:reducedeta1}
\overline{\eta}_h(D) =\frac{\eta_h(D)+ \tr(h|_{\ker(D)})}{2}.
\end{equation}
In the case of manifolds with  boundary, the regularity at $s=0$ of the equivariant $\eta$-function $\eta_h(D_{P},s)$ for $P$ in some appropriate projection space is non-trivial. A detailed discussion and proof is provided in Appendix B. 

We are now ready to define the equivariant version of the Dai-Zhang reduced $\eta$-invariant (cf. \cite[Definition 2.2]{dai2006index}) on  the even dimensional boundary $\partial M$ of $M.$
Let $g$ be a unitary representative in $K^1_H(\partial M)$. 
For $s\in [0,1],$ consider 
\begin{equation}\label{eq:Dpsigs1}
D^{\psi, g}_{[0,1]}(s):=sD^{\psi, g}_{[0,1]} +(1-s) g^{-1}D^Eg 
\end{equation}
which commutes with $h \in H$ for all $s$ and connects $g^{-1}D^Eg$ and $D^{\psi, g}_{[0,1]}:=g^{-1}D^{\psi}_{[0,1]}g$ over the cylinder $[0,1]\times \partial M.$ Such a cylinder is equipped with the equivariant boundary conditions $P^\partial$ at $\{0\} \times \partial M$ and $\id-g^{-1}P^\partial g$ at $\{1\} \times \partial M.$ Here, $D^{\psi}_{[0,1]}$ is the family $(1-\psi)D^E + \psi gD^Eg^{-1}$ over the cylinder $[0,1]\times \partial M$ with the same boundary condition. Equation \eqref{eq:Dpsigs1} can be simplified to 
\begin{equation}\label{eq:Dpsigs2}
   D^{\psi, g}_{[0,1]}(s) = D^E + (1-s\psi)g^{-1}[D^E,g], \quad s\in [0,1].
\end{equation} 

The equivariant $\eta$-function  and invariant associated to $D^{\psi, g}_{[0,1]}$ are respectively
\begin{equation}
\eta_h(D^{\psi, g}_{[0,1]}, s)=\sum_{\lambda\neq 0}\tr(h^*_\lambda)\frac{\mathrm{sgn}(\lambda)}{|\lambda|^s}, \quad \eta_h(D^{\psi, g}_{[0,1]}):= \eta_h(D^{\psi, g}_{[0,1]},0).
\end{equation} 
Similarly, from \eqref{eq:reducedeta1}, we define its associated reduced equivariant $\eta$-invariant by
\begin{equation}
\overline{\eta}_h(D^{\psi, g}_{[0,1]})=\frac{\eta_h(D^{\psi, g}_{[0,1]}) + \mathrm{Tr}( h|_{\ker( D^{\psi,g}_{[0,1]})})}{2}. 
\end{equation} 

\begin{definition} \label{equiDZeta1}
Let $h \in H.$ The equivariant Dai-Zhang $\eta$-invariant is defined by
\begin{equation} \label{eq:equiDZeta1}
\overline{\eta}_h(\partial M, g)=\overline{\eta}_h(D^{\psi, g}_{[0,1]})-\mathrm{sf}_h\{D^{\psi, g}_{[0,1]}(s)\}_{s\in[0,1]}
\end{equation}
where $D^{\psi, g}_{[0,1]}(s)$ is given by \eqref{eq:Dpsigs2}.
\end{definition}


\section{Proof of main theorem}
\label{sec4}

In this section, we present the full proof of the main theorem. First, in Section \ref{sec4.1} we establish the required equivariant boundary correction terms which involve equivariant Dai-Zhang $\eta$-invariant. Then, in Section \ref{sec4.2} we compute the corresponding local index formula using heat kernels. The main theorem is then established in Theorem \ref{mainthm} and \ref{mainthm2} for the spin and $\spinc$ cases respectively.


\subsection{Results concerning boundary correction terms}
\label{sec4.1}


Consider \eqref{eq:Dpsig1} with the variable $\psi$ and $1-\psi$ interchanged:  
\begin{equation}\label{eq:DpsigA}
D^{\psi}_{gP^{\partial}g^{-1}}=(1-\psi)D^E_{gP^{\partial}g^{-1}}+\psi (gD^Eg^{-1})_{gP^{\partial}g^{-1}}.
\end{equation}
Conjugate \eqref{eq:DpsigA} and set for $0\le u\le 1,$
\begin{equation}
\label{eq:Dpsigu2}
D^{\psi, g}_{P^{\partial}}(u)
=(1-u)D^E_{P^{\partial}}+ug^{-1}D^{\psi}_{gP^{\partial}g^{-1}}g 
=D^E_{P^{\partial}}+u(1-\psi)g^{-1}[D^E_{P^{\partial}}, g].
\end{equation}

Then, by the conjugation invariance of equivariant $\eta$-invariants, we obtain
\begin{equation} \label{eq:conjueta1}
    \overline{\eta}_h(D^{\psi, g}_{P^{\partial}}(1))
    =\overline{\eta}_h(D^{\psi, g}_{gP^{\partial}g^{-1}})
    =\overline{\eta}_h(D^{\psi}_{gP^{\partial}g^{-1}}).
\end{equation}

\begin{lemma} \label{lem:decomsf}
Let $h\in H.$ Let $D^{\psi,g}_{P^{\partial}}(s)$ be the path of perturbed operators as in \eqref{eq:Dpsigu2}. Then, as an operator with respect to $s,$ its equivariant spectral flow has an expression in terms of equivariant reduced $\eta$-invariants
\begin{equation}
\label{eq:decomsf}
\mathrm{sf}_h\{D^{\psi,g}_{P^{\partial}}(s)\}_{s\in[0,1]}
=\overline{\eta}_h(D^{\psi,g}_{P^{\partial}}(1))-\overline{\eta}_h(D^{E}_{P^{\partial}})-\int_0^{1}\frac{d}{ds}\overline{\eta}_h(D^{\psi,g}_{P^{\partial}}(s))ds.
\end{equation}
\end{lemma}

\begin{proof}
This follows from \cite[Theorem 9.5]{limwang2}.
\end{proof}

On the other hand, suppose $M$ is decomposed into $M^+ \cup_N M^-$ where 
\begin{equation}\label{eq:cutmfd1}
M^-:=M \backslash ([0,1] \times \partial M)
\end{equation} 
and $N=\{1\} \times \partial M$ is a closed hypersurface. We equip the boundary $\partial M^-=\{1\} \times \partial M$ with the boundary condition $P^{\partial M^-}.$ Then, we have 
\begin{equation} \label{eq:etasplit1}
    \overline{\eta}_h (D^E_{P^{\partial M}})= \overline{\eta}_h (D^E_{P^{\partial M^-}}).
\end{equation}
by a direct equivariant adaptation of \cite[Proposition 2.16]{muller1994eta}: 

\begin{proposition}
Let $M$ be a manifold with smooth boundary $Y$, $D$ a Dirac type operator on $M$. Let $a\ge 0$, $M_a=M\cup([0,a]\times Y)$ and $D(a)$ be the natural extension of $D$ on $M_a$. Let $D(a)_{\sigma}$ be the self-adjoint extension equipped M\"uller's boundary condition.
Then the equivariant eta invariant $\eta_h(0, D(a)_{\sigma})$ is independent of $a.$
\end{proposition}

\begin{proof}
The proof follows almost verbatim from the nonequivariant case except that one needs to show vanishing of $\eta_h(s, \hat D_a)$ for $\hat D_a=\gamma(\frac{\partial}{\partial u}+A)$ on $S_a\times Y$. 
Here $S_a$ stands for the circle of radius $2a$. In the nonequivariant case, this follows as a result of $\hat D_a$ having symmetric spectrum. 
In fact, let $\gamma\tau$ be the standard involution of the spinor bundle over $S_a\times Y$. Then $A\gamma\tau=\gamma\tau A$ (See~\cite{muller1994eta} equation (1.31)). 
An eigenfunction of $\hat D_a$ has the form 
$e_j\otimes\psi_i$ with eigenvalue $\lambda$ where $e_j$ and $\psi_i$ are eigenfunctions of $\gamma \frac{\partial}{\partial u}$ and $\gamma A$ respectively. 
Then it is straightforward to check that $\gamma\tau e_j\otimes \gamma\tau \psi_i$ is a corresponding eigenvector for eigenvalue $-\lambda$. 
Denote by the $\lambda$-eigenspace to be $E_{\lambda}$. Because $h$ acts by isometry so it preserves $\gamma$ and $\gamma\tau$, so 
\[
E_{\lambda}\rightarrow E_{-\lambda}, \qquad e_{j}\otimes \psi_i\mapsto \gamma\tau e_j\otimes \gamma\tau \psi_i
\]
is an equivariant map. 
Therefore $\mathrm{Tr} (h|_{E_{\lambda}})=\mathrm{Tr} (h|_{E_{-\lambda}})$. This implies that $\eta_h(s, \hat D_a)$ also vanishes identically.
\end{proof}

\begin{lemma} \label{lem:etasf}
Let $h\in H.$ Let $D^{\psi,g}_{P^{\partial}}(s)$ be the path of perturbed operators as in \eqref{eq:Dpsigu2}. Suppose $M=M^\pm$ as above. Let $\tau^h_{\mu}$ be the equivariant Maslov triple index of the triple of $h$-equivariant boundary projections $(\cP^{\psi}_{[0,1]}, P^{\partial}, \cP^E_{M^-})$ where $\cP^{\psi}_{[0,1]}$ is the Calder\'{o}n projection associated to $D^\psi$ on $[0,1]\times \partial M$ with $gP^\partial g^{-1}$ at $\{0\} \times \partial M;$ and  $\cP^E_{M^-}$ is the Calder\'{o}n projection on $M^-.$  Then
\begin{equation}
\label{eq:etasf}
\overline{\eta}_h(D^{\psi,g}_{P^{\partial}}(1))-\overline{\eta}_h(D^{E}_{P^{\partial}})
=\overline{\eta}_h(D^{\psi, g}_{[0,1]})-\tau^h_{\mu}(\cP^{\psi}_{[0,1]}, P^{\partial}, \cP^E_{M^-}).
\end{equation}
\end{lemma}

\begin{proof}
This follows from \eqref{eq:conjueta1} and \eqref{eq:etasplit1} that
\[
\overline{\eta}_h(D^{\psi,g}_{P^{\partial}}(1))-\overline{\eta}_h(D^{E}_{P^{\partial}}) 
=\overline{\eta}_h(D^{\psi}_{gP^{\partial}g^{-1}}) - \overline{\eta}_h (D^E_{P^{\partial M^-}}),
\] 
then, by \cite[Theorem 9.10]{limwang2} \footnote{Note that in \cite[Theorem 9.10]{limwang2} the ``cut'' manifold is obtained by cutting a closed manifold into two manifolds with boundary $M^\pm$ along $N$ (see also \cite[\S 5]{kirk2000eta}), whereas in our case \eqref{eq:cutmfd1} a cylinder is removed from a manifold with boundary. However, \cite[Theorem 9.10]{limwang2} still works here because when $\hat{\eta}_h$ splits, $\tau_h$ only depend on the canonical Calder\'on projectors $\cP^{\psi}_{[0,1]}$ on the cylinder $[0,1]\times \partial M$ and $\cP^E_{M^-}$ on the interior $M^-$ respectively, as well as $P^{\partial M^-}$ on $\partial M^-,$ for which $P^\partial$ plays the same role from the viewpoint of \eqref{eq:etasplit1}.}, we have
\[
\overline{\eta}_h(D^{\psi}_{gP^{\partial}g^{-1}}) - \overline{\eta}_h (D^E_{P^{\partial M^-}})
= \overline{\eta}_h(D^{\psi, g}_{[0,1]})-\tau^h_{\mu}(\cP^{\psi}_{[0,1]}, P^{\partial}, \cP^E_{M^-}).
\]
\end{proof}

\vspace{-0.5cm}


\subsection{Computation of local index formula}
\label{sec4.2}


This section is dedicated to the full proof of the main result of this paper: Theorem \ref{mainthm}. First, we compute the integral term in \eqref{eq:decomsf}
\begin{equation}
\label{eq:variofeta}
\int_0^{1}\frac{d}{du}\overline{\eta}_h(D^{\psi,g}_{P^{\partial}}(u))du
\end{equation}
by relating it to a heat trace followed by local index theorem.

For simplicity, denote by 
\[
B_u:=D^{\psi, g}_{P^{\partial}}(u)
\]
the family of self-adjoint elliptic operators of order one as in \eqref{eq:Dpsigu2}.

Denote by $M^h$ the fixed point submanifold of $M.$
Associated to each component, labelled by $i$, denote the dimension of the $i$-th component to be $m_i$ and $N_i$ the normal bundle associated to the component.
By adapting Proposition 2.6 in \cite{muller1994eta} to the equivariant setting, one has
\begin{equation}
\label{eq:etaheattr}
\frac{\partial}{\partial u}\eta_h(s, B_u)=\frac{s}{\Gamma(\frac{s+1}{2})}\int_0^{\infty}t^{\frac{s-1}{2}}\mathrm{Tr}_h\left[\big(\frac{d}{du}B_u\big) e^{-tB_u^2}\right]dt
\end{equation}
when  $\mathrm{Re}(s)>\mathrm{max}\{m_i-1\}$. The integral on the right hand side admits a meromorphic extension and at $s=0$ it has a simple pole whose residue can be calculated.
This suggests that the heat trace
\begin{equation}
\label{eq:extheattr}
\mathrm{Tr}_h\left(\frac{d}{du}(B_u) e^{-tB_u^2}\right)
\end{equation}
will be investigated. Here,  $\mathrm{Tr}_h(\cdots) := \mathrm{Tr}( h \cdots).$
Let us obtain explicit expressions of (\ref{eq:extheattr}).
On one hand, one has
\[
\frac{d}{du}D^{\psi, g}_{P^{\partial}}(u)=(1-\psi)g^{-1}[D^E, g]
\]
which is a pseudodifferential operator of order $0$. See (3.12) of \cite{dai2006index}.
Then, (\ref{eq:extheattr}) is
\[
\mathrm{Tr}_h\left(\frac{d}{du}(B_u) e^{-tB_u^2}\right)=\mathrm{Tr}\left[h(1-\psi)g^{-1}[D^E, g] e^{-tD^{\psi, g}_{P^{\partial}}(u)^2}\right].
\]
On the other hand, $\mathrm{Tr}_h\left(\frac{d}{du}(B_u) e^{-tB_u^2}\right)$ has an asymptotic expansion when $t$ is close to $0$.

Adapt the asymptotic expansion of~\cite{Grubb} to the equivariant case for $B_u=D^{\psi, g}_{P^{\partial}}(u)$ where $0\le u\le 1$, we have
\begin{multline}
\label{eq:asymex}
\mathrm{Tr}\left(h\frac{d}{du}(B_u) e^{-tB_u^2}\right)=\mathrm{Tr}\left(h(1-\psi)g^{-1}[D^E, g] e^{-tD^{\psi, g}_{P^{\partial}}(u)^2}\right) \\
\sim \sum_i\sum_{0\le k<m_i}c_{i,k-m_i}(u)t^{\frac{k-m_i}{2}}+\sum_{k\ge m_i}(c_{i,k-m_i}(u)+c_{i,k-m_i}'(u)\log t)t^{\frac{k-m_i}{2}}
\end{multline}
as $t\to 0.$
Proceed as in Corollary 2.9 of~\cite{muller1994eta} by computing the residue at $0$, 
we find that $\frac{\partial}{\partial v}\eta_h(s, B_v)$ is holomorphic at $s=0$ with
\begin{equation}
\label{eq:eta&c}
\frac{\partial}{\partial u}\eta_h(s, B_u)|_{s=0}=-\frac{2}{\sqrt{\pi}}\sum_i c_{m_i-1, i}(u).
\end{equation}
In fact, in (\ref{eq:etaheattr}) when $t>1$, the integral is a holomorphic function in $s$ and does not contribute to the residue, while for $0<t\le 1,$ the trace in the integrand can be replaced by the first few terms (when $k<m_i$) in the small time asymptotics~(\ref{eq:asymex}) because the remainder term is again holomorphic at $0$. 
Thus, $\frac{\partial}{\partial u}\eta_h(s, B_u)|_{s=0}$ is equal to the residue at $s=0$ of 
\[
\frac{s}{\Gamma(\frac{s+1}{2})}\int_0^1 t^{\frac{s-1}{2}}\sum_{i}\sum_{0\le k<m_i} c_{i, k-m_i}(u)t^{\frac{k-m_i}{2}}dt
\]
where all terms vanishes except when $n=m_i-1$ for the $i$-th component.
Thus, (\ref{eq:eta&c}) is obtained. A detailed argument of this sort can be found for example in Lemma 6.6 of~\cite{MR3500823}. 
With $B_u=D^{\psi, g}_{P^{\partial}}(u)$ for $0\le u\le 1$, we have
\[
\frac{d}{du}\overline{\eta}_h(D^{\psi, g}_{P^{\partial}}(u))=-\frac{1}{\sqrt{\pi}}\sum_i c_{m_i-1, i}(u).
\]

Hence, in order to find $\int_0^{1}\frac{d}{du}\overline{\eta}_h(D^{\psi,g}_{P^{\partial}}(u))du$, we only need to find $\sum_i\int_0^1 c_{m_i-1, i}(u)du.$
To calculate $\sum_ic_{m_i-1, i}(u)$, we multiply both sides of (\ref{eq:asymex}) by $t^{\frac12}$ and compare the constant term by letting $t\to 0_+$:
\begin{equation}
\label{eq:c_mi}
\sum_i c_{m_i-1, i}(u)=\lim_{t\to0+}\mathrm{Tr}\left(h\ t^{\frac12}(1-\psi)g^{-1}[D^E, g]\ e^{-tD^{\psi, g}_{P^{\partial}}(u)^2}\right).
\end{equation}
Then, we need only to calculate the right hand side of \eqref{eq:c_mi} by using the local index theory technique.
Denote by $\overline{D}$ the extension of the Dirac operator $D$ from $M$ to the double of $M$ 
and
$E_I(t)$ the associated heat operator $e^{-t\overline{D^{\psi,g}(u)}^2}$. 
Let $E_b(t)$ be the heat operator $e^{-t(D^{\psi, g}_{P^{\partial}})^2}$ on the half cylinder $\partial M\times[0,\infty)$.

Denote by $E_I(t,x,y)$ and $E_b(t,x,y)$ the respective Schwartz kernels of $E_I(t)$ and $E_b(t)$.
Recall that on the half cylinder $\partial M\times[-2\epsilon, \infty)$, $\psi$ is a smooth increasing function from $0$ to $1$ which is $0$ on $[-2\epsilon, -\epsilon]$ and $1$ when $x\ge0.$

\begin{lemma} 
For $h \in H,$
\begin{multline}
\label{eq:heattrint}
\lim_{t\to0}\mathrm{Tr}\left(h(1-\psi)g^{-1}[t^{\frac12}D^E, g]\ e^{-tD^{\psi, g}_{P^{\partial}}(u)^2}\right)\\
=\lim_{t\to0} \int_M\mathrm{tr}\left[(1-\psi(x))(hg(h^{-1}x))^{-1}(h[t^{\frac12}D^E, g](h^{-1}x))hE_I(t, h^{-1}x, x)\right]d\mathrm{vol}.
\end{multline}
Here, $\mathrm{tr}$ denotes the matrix trace.
\end{lemma}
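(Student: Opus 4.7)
The plan is to apply the equivariant Lidskii trace formula to convert the operator trace into a pointwise trace of Schwartz kernels, and then to use a Duhamel parametrix to replace the heat kernel of $D^{\psi,g}_{P^\partial}(u)^2$ by the heat kernel $E_I(t,\cdot,\cdot)$ of the doubled operator $\overline{D^{\psi,g}(u)}^2$. The justification for this replacement is that the cutoff $(1-\psi)$ localises the integrand away from $\partial M$, where the two kernels agree up to rapidly-decaying terms.

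For the first step, recall that for any trace-class operator $Q$ with smooth Schwartz kernel $K_Q(x,y)$, the equivariant trace admits the representation
\[
\mathrm{Tr}(hQ) = \int_M \mathrm{tr}\bigl(h\cdot K_Q(h^{-1}x,x)\bigr)\, d\mathrm{vol}(x).
\]
Since $[D^E,g] = c(dg)$ is an order-zero differential operator (multiplication by a bundle endomorphism), the prefactor $(1-\psi)g^{-1}[t^{1/2}D^E, g]$ is also multiplication by an endomorphism. I would apply the identity to $Q = (1-\psi)g^{-1}[t^{1/2}D^E,g]\,e^{-tB_u^2}$ with $B_u := D^{\psi, g}_{P^\partial}(u)$, and use the fact that $h$ preserves the collar (so $\psi\circ h^{-1} = \psi$) to obtain exactly the right-hand side of the claim, except that $K_{B_u^2}(t,h^{-1}x,x)$ appears in place of $E_I(t,h^{-1}x,x)$.

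For the second step, I would invoke a Duhamel parametrix decomposition for the boundary heat kernel,
\[
K_{B_u^2}(t,x,y) = \phi_1(x)\,E_I(t,x,y)\,\tilde\phi_1(y) + \phi_2(x)\,E_b(t,x,y)\,\tilde\phi_2(y) + R(t,x,y),
\]
where $\phi_1$ is supported in $\{\mathrm{dist}(\cdot,\partial M) > \epsilon/2\}$ and equals $1$ on $\{\mathrm{dist}(\cdot,\partial M) \ge \epsilon\}$, $\phi_2$ is supported in a collar of width $2\epsilon$, and $R(t,x,y)$ is uniformly $O(t^N)$ for every $N$. Because the factor $(1-\psi)$ is supported inside $\{\mathrm{dist}(\cdot,\partial M) \ge \epsilon\}$, one has $\phi_1 \equiv 1$ and $\phi_2 \equiv 0$ there, so the $E_b$-piece and the remainder $R$ do not survive the limit $t\to 0^+$; the explicit $t^{1/2}$ inside the commutator provides any extra decay required.

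The hard part will be the off-diagonal decay estimates for $E_b$ needed to confirm that the boundary heat kernel contribution is negligible on the support of $(1-\psi)$. These estimates are standard (via finite propagation speed for the wave equation, as in the non-equivariant Dai-Zhang argument and in Cheeger's constructions), and they transfer verbatim to the equivariant setting because $h$ acts by isometries commuting with $B_u$, with $E_I$, with $E_b$ and with all the cutoffs involved.
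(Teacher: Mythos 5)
Your proposal is correct and follows essentially the same route as the paper's proof: both convert the $h$-trace to an integral of the kernel at $(h^{-1}x,x)$, build a parametrix from the doubled-manifold and half-cylinder heat kernels via patching cutoffs, use Duhamel to control the error, and observe that the $(1-\psi)$ factor kills the boundary piece and the remainder on the support of the integrand. The only cosmetic difference is that you invoke finite propagation speed for the off-diagonal decay of $E_b$ whereas the paper uses the explicit Gaussian heat-kernel estimates \eqref{eq:1} together with the $\frac{1}{7}\epsilon$ separation of the cutoff supports; the paper also spells out the $h$-invariance of the cutoffs needed to carry the diagonal estimate over to $(h^{-1}x,x)$, a point you assert but do not elaborate.
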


\begin{proof}
Let $\psi_I$ be a smooth decreasing function from $1$ to $0$ where $\psi'$ is supported in $[\frac37\epsilon, \frac47\epsilon]$ and set $\psi_{b}=1-\psi_I.$
Let $\phi_I$ be a smooth decreasing function from $1$ to $0$ where $\phi'$ is supported in $[\frac17\epsilon, \frac27\epsilon]$ and $\phi_b$ be smooth increasing function from $0$ to $1$ where $\phi_b'$ is supported in $[\frac57\epsilon, \frac67\epsilon]$.
Note that they are invariant under $h$ as functions on $\partial M\times[0, \infty).$
Construct an approximate heat kernel
\[
E(t)=\phi_I E_I(t)\psi_I+\phi_bE_b(t)\psi_b.
\]
By the standard heat kernel estimate as well as the estimate on the half cylinder with APS type boundary condition, one has the estimates:
\begin{equation}
\label{eq:1}
|\partial_t^k\partial_x^lE_I(t,x,y)|\le C t^{-n-\frac12-k-\frac{l}{2}}e^{-\frac{d(x,y)^2}{4t}},\qquad |\partial_t^k\partial_x^lE_b(t,x,y)|\le Ct^{-n-\frac12-k-\frac{l}{2}}e^{-\frac{d(x,y)^2}{4t}}.
\end{equation}
Thus, from a similar estimate as in \cite[Lemma 22.11]{booss2012elliptic}, we have
\[
|(\partial_t+(D^{\psi,g})^2_x))E(t,x,y)|\le C e^{-Cd(x,y)^2/t}.
\]
In particular, this holds when $(x,y)$ is replaced by $(h^{-1}x,x)$, because $\phi_I,\phi_b$ satisfy $\phi_I(h^{-1}x)=\phi_I(x)$ and $\phi_b(h^{-1}x)=\phi_b(x).$
Also, by definition the support of $\phi_I'$ (resp. $\phi_b'$) and the support of $\psi_I$ (resp. $\psi_b$) are $\frac17\epsilon$-apart.
This shows the existence of $C>0$ such that
\begin{equation}
\label{eq:5}
|(\partial_t+(D^{\psi,g})^2_x))E(t,h^{-1}x,x)|\le C e^{-C/t}.
\end{equation}
By the standard procedure of Duhamel's principle, the relations between the approximate heat operator $E(t)$ and the true heat kernel of $e^{-t(D^{\psi, g})^2}$ can be established:
\begin{equation}
\label{eq:4}
e^{-t(D^{\psi,g})^2}=E(t)+\sum_{k=1}^{\infty}E(t)*C_k(t)
\end{equation}
where $C_1:=(\partial_t+(D^{\psi,g})^2)E(t)$ and $C_k=C_1*C_{k-1}$ is the convolution interation.
As $t$ approaches $0$, we have
\begin{equation}
\label{eq:3}
|e^{-t(D^{\psi,g})^2}(t,x,y)|<Ct^{-\frac{n}{2}}e^{-Cd(x,y)^2/t}
\end{equation}
and
\begin{equation}
\label{eq:2}
|e^{-t(D^{\psi,g})^2}(t,x,x)-E(t,x,x)|<Ce^{-C/t}.
\end{equation}
See \cite[Theorem 22.14]{booss2012elliptic}.
We will need to generalize (\ref{eq:2}) from $(x,x)$ to  $(h^{-1}x,x).$
First, when $h^{-1}x\neq x$, from (\ref{eq:1}) and (\ref{eq:3}) we have
\begin{equation}
\label{eq:6}
|e^{-t(D^{\psi,g})^2}(t,h^{-1}x,x)-E(t,h^{-1}x,x)|<Ct^{-\frac{n}{2}}e^{-Cd(h^{-1}x, x)^2/t}.
\end{equation}
From (\ref{eq:4}), every term in the series representing $e^{-t(D^{\psi,g})^2}(t,h^{-1}x,x)-E(t,h^{-1}x,x)$ contains $C_1$ and its iterations under the convolution with $E(t)$.
By applying (\ref{eq:5}) to each term of the form $X(t):=E(t)*C_k(t)$, we find that $(1-\phi(x))X(t, h^{-1}x, x)$ vanishes unless $d(h^{-1}x,x)\ge\frac17\epsilon$. Otherwise, it contradicts with $d(\mathrm{supp}(\phi_i'), \mathrm{supp}(\psi_i))\ge\frac{1}{7}\epsilon$.
In fact, let us look at $k=1$ for example.
There are four types of general terms:
\begin{align*}
&\int_M(1-\psi(x))\phi_b(x) E_b(t, h^{-1}x,y)\psi_b(y)\cdots\psi_I(x)dy\\
&\int_M(1-\psi(x))\phi_I(x)E(t,h^{-1}x,y)\psi_I(y) \phi_b'(y)\cdots \psi_b(x)dy\\
&\int_M\cdots\psi_I(y)\phi_I'(y)\cdots dy\\
&\int_M\cdots\psi_b(y)\phi_b'(y)\cdots dy.
\end{align*}
By the definition of the cutoff functions above, all terms vanish.
Therefore, we obtain
\[
\lim_{t\to0}\left(h(1-\psi)e^{-t(D^{\psi,g})^2}-h(1-\psi)E(t)\right)=0.
\]
Note that $[t^{-\frac12}D^E, g]\to dg$ as $t\to 0$, we have
\begin{align}
\label{eq:heattrint2}
\lim_{t\to0}&\mathrm{Tr}[hg^{-1}[t^{\frac12}D^E, g](1-\psi)e^{-t(D^{\psi,g})^2}] \nonumber \\
&=\lim_{t\to 0}[\mathrm{Tr}(h(1-\psi)g^{-1}[t^{\frac12}D^E, g]\phi_I E_{I}(t)\psi_I) 
+\mathrm{Tr}(h(1-\psi)g^{-1}[t^{\frac12}D^E, g]\phi_b E_{b}(t)\psi_b)].
\end{align}
The second term of \eqref{eq:heattrint2} vanishes because $(1-\psi)\phi_b=0$ by definition.
The lemma is then proved.
\end{proof}

Let
\[
A_t(u):=t^{\frac12}(1-u)\overline{D^E}+t^{\frac12}ug^{-1}\overline{D^{\psi}}g.
\]
Then $A_t(0)=t^{\frac12}D^E$, $A_t(1)=t^{\frac12}g^{-1}D^{\psi}g$ and from (\ref{eq:Dpsigu2}) we have
\begin{equation}
\label{eq:A_t}
A_t(u)=A_t(0)+u(1-\psi)g^{-1}[A_t(0), g].
\end{equation}
Note that the right hand side of (\ref{eq:heattrint}) is simply
\begin{equation*}
\mathrm{Tr}_h\left(\frac{d}{du}(A_t(u)) e^{-A_t(u)^2}\right).
\end{equation*}

\begin{lemma}
\label{prop:localind} For $h \in H,$ the equivariant heat trace \[
\sum_i c_{m_i-1, i}(u)=\lim_{t\to 0+}\mathrm{Tr}\left(h\frac{d}{du}(A_t(u)) e^{-A_t(u)^2}\right)
\]
is calculated by
\begin{equation*}
\sum_i\left( \frac{1}{2\pi \sqrt{-1}} \right)^{m_i+1} \int_{M^h_{(i)}}\frac{\hat{A}(M^h)}{\mathrm{det}^{\frac12}(1-he^{-R^{N_i}})}\ \mathrm{tr}\left[he^{-R^E}\right]
\ \mathrm{tr}\left[h\ g^{-1}dg\ e^{(1-u)u(g^{-1}dg)^2}\right].
\end{equation*}
\end{lemma}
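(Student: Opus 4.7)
The plan is to apply the equivariant local index theorem via Getzler rescaling around each component of the fixed-point set $M^h$, adapted to accommodate the Quillen-style $u$-deformation of $A_t(u)$. This follows the strategy of Fang \cite{fang2005equivariant} in the closed odd-dimensional case, with the new feature that the perturbation $u(1-\psi)g^{-1}[A_t(0),g]$ is supported away from the boundary cylinder.

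First, by the preceding lemma, the heat trace reduces to
\[
\lim_{t\to 0}\int_M \mathrm{tr}\bigl[(1-\psi(x))(hg(h^{-1}x))^{-1}(h[A_t(0),g](h^{-1}x))\, h E_I(t, h^{-1}x, x)\bigr]\,d\mathrm{vol},
\]
with $A_t(0)=t^{1/2}D^E$. By standard Berline-Vergne-type localization, this integral concentrates in a tubular neighborhood of $M^h$ as $t\to 0$. Since the product structure on the cylinder $[0,1]\times\partial M$ forces $M^h$ to lie in the interior $M^-$ where $\psi\equiv 0$, the cutoff factor $(1-\psi)$ equals $1$ on each component $M^h_{(i)}$ and therefore disappears from the final formula.

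Next, observe from \eqref{eq:A_t} that $\tfrac{d}{du}A_t(u)=(1-\psi)g^{-1}[A_t(0),g]$. Under Getzler rescaling, Clifford multiplication rescales so that $[A_t(0),g]=t^{1/2}c(dg) \rightsquigarrow g^{-1}dg$ as $t\to 0$. Expanding
\[
A_t(u)^2 = A_t(0)^2 + u\{A_t(0),(1-\psi)g^{-1}[A_t(0),g]\} + u^2(1-\psi)^2\bigl(g^{-1}[A_t(0),g]\bigr)^2,
\]
the Lichnerowicz piece $A_t(0)^2$ rescales to the standard harmonic oscillator whose Mehler kernel at the diagonal along $M^h_{(i)}$ produces $\hat{A}(M^h)$ along tangent directions, together with the normal-bundle determinant $\det^{-1/2}(1-he^{-R^{N_i}})$ arising from Gaussian integration against the rotation of $h$ on the normal fibres, and the twist factor $\mathrm{tr}[he^{-R^E}]$. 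The remaining cross and quadratic terms in $u$ rescale to $u(1-u)(g^{-1}dg)^2$ in the exponent, and pair with $\tfrac{d}{du}A_t(u)\rightsquigarrow g^{-1}dg$ in front of the exponential to yield the odd factor $\mathrm{tr}[h\,g^{-1}dg\,e^{u(1-u)(g^{-1}dg)^2}]$. Collecting all contributions with the normalisation $(2\pi\sqrt{-1})^{-(m_i+1)}$ from the Berezin-type Chern-Weil prescription gives the stated identity.

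The main obstacle is rigorously controlling the equivariant rescaling in the presence of both the Clifford deformation by $g^{-1}[D^E,g]$ and the $h$-shift at the spatial arguments $(h^{-1}x,x)$ of the heat kernel. Concretely, one must verify that in the Getzler limit the perturbation $(1-\psi)g^{-1}[A_t(0),g]$ behaves like a superconnection $1$-form, so that its Duhamel expansion against $e^{-A_t(0)^2}$ truncates at finite order on $M^h$ and reassembles into the exponentiated closed form $e^{u(1-u)(g^{-1}dg)^2}$. Isolating precisely the $n=m_i-1$ coefficient from the asymptotic expansion \eqref{eq:asymex} (as opposed to lower-order terms that would diverge as $t\to 0$) requires a degree-counting argument matching the exterior-algebra degrees of the rescaled Clifford symbols with the dimensions $m_i$ of the fixed-point components, exactly as in the equivariant heat-kernel proof of the Atiyah-Bott-Segal-Singer formula.
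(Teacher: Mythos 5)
Your overall strategy (equivariant heat-kernel localization at $M^h$ with Getzler-type rescaling of the $u$-deformation) is the right one and matches the paper's in spirit, but there is a genuine topological gap in how you eliminate the cutoff $\psi$. You assert that \emph{"the product structure on the cylinder $[0,1]\times\partial M$ forces $M^h$ to lie in the interior $M^-$ where $\psi\equiv 0$"}, and then use $(1-\psi)\equiv 1$ on $M^h_{(i)}$. This is false in general: since $h$ is an isometry preserving $\partial M$ and the metric is product near the boundary, any fixed point of $h$ on $\partial M$ spawns an entire fixed normal geodesic, so the cylindrical part of $M^h$ is $(\partial M)^h\times[0,1)$, which is nonempty precisely when $h$ has fixed points on the boundary. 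The lemma does not assume $(\partial M)^h=\emptyset$, so your argument only covers a special case and silently drops all the $\psi$-dependent terms (including the $d\psi\, g^{-1}dg$ term arising from the anticommutator $\{A_t(0),(1-\psi)g^{-1}[A_t(0),g]\}$) that live on the transition band where $0<\psi<1$.

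The paper deals with this by an explicit two-step cancellation rather than by a topological claim. After passing to the limit one gets $\mathrm{tr}\bigl(h(1-\psi)\,g^{-1}dg\,C_u\bigr)$ with $C_u=\exp[(u(1-\psi)-u^2(1-\psi)^2)(g^{-1}dg)^2+u\,d\psi\,g^{-1}dg]$. The $d\psi$ piece contributes only $\mathrm{tr}[(h\,g^{-1}dg)^{2k}]$-type terms, which vanish because the trace of an even power of the odd form $g^{-1}dg$ is zero. For the remaining $(1-\psi)$-dependence, the paper observes that on the region where $(1-\psi)\neq 1$ (necessarily in the cylinder) the integrand has no differential in the normal/cylinder direction (since $g$ and the metric are product there), while the cylindrical part of $M^h$, if nonempty, is a product $(\partial M)^h\times(-\epsilon,0)$; hence the top-degree integrand vanishes on that piece. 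Only the region $\psi\equiv 0$ contributes, where $u(1-\psi)-u^2(1-\psi)^2=u(1-u)$ and the stated formula emerges. To repair your proof you would need to replace the topological claim by exactly this form-degree argument, together with the evenness argument for the $d\psi$ term; otherwise the proof is only valid when $h$ acts freely on $\partial M$.
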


\begin{proof}
From (\ref{eq:A_t}) one has
\begin{align*}
A_t(u)^2=&A_t(0)^2+[A_t(0), u(1-\psi)g^{-1}[A_t(0),g]]\\
&+u(1-\psi)g^{-1}[A_t(g), g]u(1-\psi)g^{-1}[A_t(0), g].
\end{align*}
As $t\to 0$, we have $[A_t(0), g]\to dg$ and $d(g^{-1})=-g^{-1}dg g^{-1}$, then
\begin{align*}
A_t(u)^2-A_t(0)^2\to& -u(d\psi)g^{-1}dg+u(1-\psi)d(g^{-1})dg+u(1-\psi)g^{-2}d^2g+u^2(1-\psi)^2(g^{-1}dg)^2\\
&=-[u^2(1-\psi)^2-u(1-\psi)](g^{-1}dg)^2-u(d\psi)g^{-1}dg.
\end{align*}
Similarly, as $t\to 0$,
\[
\frac{d}{du}(A_t(u))=(1-\psi)g^{-1}[A_t(0), g]\to (1-\psi)g^{-1}dg.
\]
Set the following matrix-valued function on $M$
\[
C_u:=\exp\left[(u^2(1-\psi)^2-u(1-\psi))(g^{-1}dg)^2+u\ d\psi\ g^{-1}dg\right].
\]
Then
\begin{align*}
&\lim_{t\to0+}\mathrm{Tr}\left(h\frac{d}{du}(A_t(u)) e^{-A_t(u)^2}\right)\\
=&\mathrm{Tr}\left[h(1-\psi)\ g^{-1}dg\ C_u\ (\lim_{t\to0}e^{-A_t(0)^2})\right]\\
=&\int_M\mathrm{tr}[(1-\psi(h^{-1}x))\ h g^{-1}dg(h^{-1}x)\ hC_u(h^{-1}x)\ \lim_{t\to 0}hE_I(t, h^{-1}x, x)]d\mathrm{vol}\\
=& \sum_i \left( \frac{1}{2\pi \sqrt{-1}} \right)^{m_i+1} \int_{M^h_{(i)}}\frac{\hat{A}(M^h)}{\mathrm{det}^{\frac12}(1-he^{-R^{N_i}})}\ \mathrm{tr}(he^{-R_E^2})\ \mathrm{tr}(h(1-\psi)\ g^{-1}dg\ C_u(x))d\mathrm{vol}.
\end{align*}
To show that
\[
\mathrm{tr}(h(1-\psi)\ g^{-1}dg\ C_u(x))=\mathrm{tr}\left[h\ g^{-1}dg\ e^{(1-u)u(g^{-1}dg)^2}\right]
\]
when $x\in M^h$, we proceed exactly as the argument in \cite[equation (3.34)]{dai2006index}.
In fact, by using the property $d\psi\wedge d\psi=0$ one has
\begin{align*}
C_u&=e^{(u(1-\psi)-u^2(1-\psi)^2)(g^{-1}dg)^2}\cdot e^{u\ d\psi\ g^{-1}dg}\\
&=e^{(u(1-\psi)-u^2(1-\psi)^2)(g^{-1}dg)^2}(1+u\ d\psi\ g^{-1}dg).
\end{align*}
Then $\mathrm{tr}(h(1-\psi)\ g^{-1}dg\ C_u(x))$ splits into two terms:
\begin{align} \label{eq:trsum1}
\mathrm{tr}\big[h (1-\psi)\ g^{-1}dg\ &e^{(u(1-\psi)-u^2(1-\psi)^2)\ (g^{-1}dg)^2}\big] \\
&+\mathrm{tr}\big[h (1-\psi)u\ d\psi\ (g^{-1}dg)^2\ e^{(u(1-\psi)-u^2(1-\psi)^2)(g^{-1}dg)^2} \big]. \nonumber 
\end{align}
The second term is a linear combination of terms of the form $c_{u,\psi}\mathrm{tr}[(h\ g^{-1}dg)^{2k}]$ for some $k\in\Z$ and $c_{u,\psi}\in\C.$
Since
\[
\mathrm{tr}[(h\ g^{-1}dg)^{n}]=\mathrm{tr}\left([(h\ g^{-1}dg)^{n-1}, h\ g^{-1}dg]\right)=0
\]
when $n$ is even, the second term of the sum \eqref{eq:trsum1} vanishes.
Regarding the first term when $(1-\psi)(x)\neq 1$, $x$ belongs to a piece of the cylinder, for which $g$ does not change in the normal direction. Hence, 
\[
h \ g^{-1}dg\ e^{(u(1-\psi)-u^2(1-\psi)^2)\ (g^{-1}dg)^2}
\]
does not have differentials in the normal direction. However, the cylindrical part of $M^h$ either looks like $(\partial M)^h\times(-\epsilon, 0)$ or the empty set. Thus, when $(1-\psi)(x)\neq 0$, it does not contribute to the integration, and so $\mathrm{tr}(h(1-\psi)\ g^{-1}dg\ C_u(x))$ reduces to \[
\mathrm{tr}\left[h \ g^{-1}dg\ e^{(u-u^2)\ (g^{-1}dg)^2}\right].
\]
The proof is now complete.
\end{proof}

For an equivariant unitary map $g : M \to U(N),$ the equivariant analog of the odd Chern character (by Zhang \cite[equation (1.50)]{zhang2001lectures}) is given by
\begin{equation} 
\label{eq:equivoddChernch}
\mathrm{ch}_h(g,d)= \left( \frac{1}{2\pi \sqrt{-1}} \right)^{n+1} \sum^\infty_{n=0}  \frac{n!}{(2n+1)!} \mathrm{tr}\left[h(g^{-1}dg)^{2n+1}\right].
\end{equation} See also \cite[Proposition 1.2]{getzler1993odd} for the original formulation by Getzler.
Then, one can calculate (\ref{eq:variofeta}) using Lemma~\ref{prop:localind} by integrating over $u.$ Together with \eqref{eq:equivoddChernch}, we obtain the following theorem.

\begin{proposition} For $h \in H,$
\[
 \int_0^{1}\frac{d}{du}\overline{\eta}_h(D^{\psi,g}_{P^{\partial}}(u))du
=\sum_i\left( \frac{1}{2\pi \sqrt{-1}} \right)^{m_i+1} \int_{M^h_{(i)}}\frac{\hat{A}(M^h)}{\mathrm{det}^{\frac12}\left(1-he^{-R^{N_i}}\right)}\ \mathrm{tr}\left[he^{-R^E}\right]
\ \mathrm{ch}_h(g,d).
\] 
\end{proposition}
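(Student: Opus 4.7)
The plan is to combine the local index computation of Lemma~\ref{prop:localind} with the variation formula
\[
\frac{d}{du}\overline{\eta}_h(D^{\psi, g}_{P^{\partial}}(u))=-\frac{1}{\sqrt{\pi}}\sum_i c_{m_i-1, i}(u)
\]
derived earlier from the asymptotic expansion \eqref{eq:asymex} and the residue computation \eqref{eq:eta&c}, and then to carry out the $u$-integration explicitly. Integrating this identity over $u\in[0,1]$ and substituting the expression for $\sum_i c_{m_i-1,i}(u)$ from Lemma~\ref{prop:localind}, I can pull the $u$-independent factors $\hat{A}(M^h)/\mathrm{det}^{\frac12}(1-he^{-R^{N_i}})$ and $\mathrm{tr}[he^{-R^E}]$ outside the integral.

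The task thus reduces to evaluating
\[
\int_0^1 \mathrm{tr}\bigl[h\, g^{-1}dg\, e^{(1-u)u(g^{-1}dg)^2}\bigr]\, du.
\]
Expanding the exponential as the (finite, in the de~Rham sense) power series $\sum_{n\ge 0}\tfrac{((1-u)u)^n}{n!}(g^{-1}dg)^{2n}$ and applying the Beta function identity $\int_0^1 u^n(1-u)^n\, du=(n!)^2/(2n+1)!$, the integral collapses to
\[
\sum_{n\ge 0}\frac{n!}{(2n+1)!}\mathrm{tr}\bigl[h(g^{-1}dg)^{2n+1}\bigr],
\]
which is precisely the series defining the equivariant odd Chern character $\mathrm{ch}_h(g,d)$ of \eqref{eq:equivoddChernch}, up to the normalizing powers of $2\pi\sqrt{-1}$. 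Interchanging sum and integral is harmless because only finitely many terms survive for degree reasons.

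What remains is a careful bookkeeping of constants: the factor $-1/\sqrt{\pi}$ from the variation formula, the prefactor $(2\pi\sqrt{-1})^{-(m_i+1)}$ from Lemma~\ref{prop:localind}, and the $(2\pi\sqrt{-1})^{-(n+1)}$ sitting inside the definition of $\mathrm{ch}_h(g,d)$ must combine correctly on each fixed-point component $M^h_{(i)}$. Since $M^h_{(i)}$ has (odd) dimension $m_i$, only the summand of total de~Rham degree $m_i$ survives integration, which forces the powers of $2\pi\sqrt{-1}$ to match term-by-term with the index $n$ determined by the complementary degree of $\hat{A}(M^h)\cdot \mathrm{det}^{-\frac12}(1-he^{-R^{N_i}})\cdot\mathrm{tr}[he^{-R^E}]$. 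I expect this degree-counting normalization check to be the only delicate step, and it follows closely the non-equivariant argument in \cite[Section 3]{dai2006index}, since passing to the fixed-point submanifolds has already been absorbed into Lemma~\ref{prop:localind}.
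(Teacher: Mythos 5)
Your proof takes essentially the same approach as the paper (whose proof is only a one-sentence pointer to Lemma~\ref{prop:localind} and equation \eqref{eq:equivoddChernch}): the Beta-function identity $\int_0^1 u^n(1-u)^n\,du=(n!)^2/(2n+1)!$ that converts the $u$-integral of $\mathrm{tr}\bigl[h\,g^{-1}dg\,e^{(1-u)u(g^{-1}dg)^2}\bigr]$ into the series $\sum_{n\ge 0}\tfrac{n!}{(2n+1)!}\mathrm{tr}[h(g^{-1}dg)^{2n+1}]$ is precisely the step the paper leaves implicit, and the fixed-point localisation is already contained in Lemma~\ref{prop:localind}. The caveat you flag about the $(2\pi\sqrt{-1})$-powers and the $-1/\sqrt{\pi}$ from the variation formula is well taken --- these normalisations are loosely recorded in the paper's statement --- but your degree-counting reconciliation is the same resolution as in the non-equivariant argument of \cite[Section 3]{dai2006index}.
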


Combining Lemma~\ref{lem:Toepsisf}, Lemma~\ref{lem:decomsf}, and Lemma~\ref{lem:etasf}, we obtain:

\begin{theorem} For $h \in H,$
\label{thm:ind.T.psi}
\begin{multline*}
\mathrm{Ind}_hT^{E}_{g, \psi}=-\sum_i\left( \frac{1}{2\pi \sqrt{-1}} \right)^{m_i+1}\int_{M^h_{(i)}}\frac{\hat{A}(M^h)}{\mathrm{det}^{\frac12}(1-h e^{-R^{N_i}})}\\
\times \mathrm{Tr}[h\exp(-R^E)]\mathrm{ch}_h(g, d)-\overline\eta_h(D^{\psi,g}_{[0,1]})+\tau^h_{\mu}(\cP^{\psi}_{[0,1]}, P^{\partial}, \cP^E_{M^-}).
\end{multline*}
\end{theorem}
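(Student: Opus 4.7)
The plan is to combine the three lemmas in sequence, each of which identifies $\mathrm{Ind}_h(T^E_{g,\psi})$ with a quantity one step closer to the desired geometric formula. First I would invoke Lemma~\ref{lem:Toepsisf} to convert the equivariant index into a spectral flow problem,
\[
\mathrm{Ind}_h(T^E_{g,\psi}) = -\mathrm{sf}_h\{D^{\psi,g}_{P^\partial}(u)\}_{u\in[0,1]},
\]
reducing the task to evaluating the equivariant higher spectral flow along the homotopy defined in \eqref{eq:Dpsigu2}.

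Next I would apply Lemma~\ref{lem:decomsf} to decompose this spectral flow as a difference of reduced equivariant $\eta$-invariants at the two endpoints, minus the variation integral $\int_0^1 \tfrac{d}{du}\overline{\eta}_h(D^{\psi,g}_{P^\partial}(u))\,du$. Then Lemma~\ref{lem:etasf} repackages the endpoint $\eta$-difference as the reduced equivariant Dai-Zhang $\eta$-invariant $\overline{\eta}_h(D^{\psi,g}_{[0,1]})$ over the cylinder $[0,1]\times\partial M$ minus the equivariant Maslov triple index $\tau^h_\mu(\mathcal{P}^\psi_{[0,1]}, P^\partial, \mathcal{P}^E_{M^-})$, invoking along the way both the conjugation invariance of $\overline{\eta}_h$ (since $g$ is $H$-equivariant, by \eqref{eq:conjueta1}) and the splitting identity \eqref{eq:etasplit1}. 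The remaining variation integral is precisely the quantity computed by the Proposition immediately preceding this theorem, producing the sum over fixed-point components $M^h_{(i)}$ of the equivariant index densities involving $\hat{A}(M^h)$, $\mathrm{tr}[he^{-R^E}]$, $\mathrm{ch}_h(g,d)$ and the normal-bundle factor $\det^{1/2}(1-h e^{-R^{N_i}})$.

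Substituting each of these three identifications in turn and collecting signs gives the stated formula. The main obstacle here is not analytic but book-keeping: all of the hard work---the heat-kernel localisation onto $M^h$, the Getzler-type rescaling that reduces the cut-off-dependent exponential $C_u$ to the odd equivariant Chern character $\mathrm{ch}_h(g,d)$, and the Calder\'on-projection manipulations underlying Lemma~\ref{lem:etasf}---has already been discharged. What remains is to track the signs and the $g$-conjugations carefully so that the three reductions above compose consistently into the displayed identity, in particular so that $\overline{\eta}_h(g^{-1}D^{\psi}_{gP^\partial g^{-1}}g)$ is correctly identified with $\overline{\eta}_h(D^{\psi}_{gP^\partial g^{-1}})$ before Lemma~\ref{lem:etasf} is applied.
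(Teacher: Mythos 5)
Your proposal is correct and takes essentially the same route as the paper, which proves this theorem simply by chaining Lemma~\ref{lem:Toepsisf}, Lemma~\ref{lem:decomsf}, Lemma~\ref{lem:etasf}, and the Proposition immediately preceding the theorem. Your explicit flag of the conjugation-invariance step $\overline{\eta}_h(g^{-1}D^{\psi}_{gP^\partial g^{-1}}g)=\overline{\eta}_h(D^{\psi}_{gP^\partial g^{-1}})$ (equation \eqref{eq:conjueta1}) is exactly the bookkeeping the paper performs implicitly in passing between the families $D^{\psi,g}_{P^\partial}(u)$ of \eqref{eq:Dpsigu2} and $D^{\psi}_{gP^\partial g^{-1}}(s)$ of \eqref{eq:Dpsig2} before invoking Lemma~\ref{lem:etasf}.
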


Finally, we arrive at the main theorem of this paper.

\begin{theorem} 
\label{mainthm} 
For $h \in H,$
\begin{multline*}
\mathrm{Ind}_hT^{E}_{g}=-\sum_i\left( \frac{1}{2\pi \sqrt{-1}} \right)^{m_i+1} \int_{M^h_{(i)}}\frac{\hat{A}(M^h)}{\mathrm{det}^{\frac12}(1-h e^{-R^{N_i}})}\\
\times \mathrm{Tr}[h\exp(-R^E)]\mathrm{ch}_h(g, d)-\overline\eta_h(\partial M, g)+\tau^h_{\mu}(gP^{\partial}g^{-1}, P^{\partial}, \cP^E_{M^-}).
\end{multline*}
\end{theorem}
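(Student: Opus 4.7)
The strategy is essentially a bookkeeping assembly: all the nontrivial analytic content (local index density, heat-kernel identification, eta-invariants, spectral flow additivity) has already been packaged in the preceding lemmas. It remains to unwind Lemma~\ref{lem:diff.ind}, Theorem~\ref{thm:ind.T.psi}, the Definition~\ref{equiDZeta1} of $\overline\eta_h(\partial M,g)$, and finally to reconcile the Maslov triple indices.

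\textbf{Step 1.} I would begin with Lemma~\ref{lem:diff.ind},
\[
\mathrm{Ind}_h(T^E_g)=\mathrm{Ind}_h(T^E_{g,\psi})+\mathrm{sf}_h\{D^{\psi}_{gP^{\partial}g^{-1}}(s)\}_{s\in[0,1]},
\]
and substitute Theorem~\ref{thm:ind.T.psi}. This yields
\[
\mathrm{Ind}_h(T^E_g)=-\sum_i\!\left(\tfrac{1}{2\pi\sqrt{-1}}\right)^{\!m_i+1}\!\!\int_{M^h_{(i)}}\!\!\tfrac{\hat A(M^h)}{\Det^{1/2}(1-he^{-R^{N_i}})}\tr[he^{-R^E}]\ch_h(g,d)-\overline\eta_h(D^{\psi,g}_{[0,1]})+\tau^h_\mu(\mathcal{P}^{\psi}_{[0,1]},P^\partial,\mathcal{P}^E_{M^-})+\mathrm{sf}_h\{D^{\psi}_{gP^{\partial}g^{-1}}(s)\}.
\]

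\textbf{Step 2.} Next, I apply Definition~\ref{equiDZeta1} of the equivariant Dai--Zhang $\eta$-invariant to the boundary $\partial M$, which rewrites
\[
\overline\eta_h(D^{\psi,g}_{[0,1]})=\overline\eta_h(\partial M,g)+\mathrm{sf}_h\{D^{\psi,g}_{[0,1]}(s)\}_{s\in[0,1]}.
\]
Substituting produces the desired local term and $-\overline\eta_h(\partial M,g)$, plus a residual
\[
\Delta:=\mathrm{sf}_h\{D^{\psi}_{gP^{\partial}g^{-1}}(s)\}_{s\in[0,1]}-\mathrm{sf}_h\{D^{\psi,g}_{[0,1]}(s)\}_{s\in[0,1]}
\]
added to $\tau^h_\mu(\mathcal{P}^{\psi}_{[0,1]},P^\partial,\mathcal{P}^E_{M^-})$. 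Thus Theorem~\ref{mainthm} reduces to the single identity
\[
\tau^h_\mu(\mathcal{P}^{\psi}_{[0,1]},P^\partial,\mathcal{P}^E_{M^-})+\Delta=\tau^h_\mu(gP^\partial g^{-1},P^\partial,\mathcal{P}^E_{M^-}).
\]

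\textbf{Step 3 (main obstacle).} The crux of the proof is this last identity. Intuitively, the two Maslov triples differ only in the first entry: the cylinder Calder\'on projection $\mathcal{P}^{\psi}_{[0,1]}$ is being replaced by the endpoint boundary condition $gP^\partial g^{-1}$. I would invoke the cocycle/additivity property of the equivariant Maslov triple index from \cite[Section 6]{limwang2} to write
\[
\tau^h_\mu(gP^\partial g^{-1},P^\partial,\mathcal{P}^E_{M^-})-\tau^h_\mu(\mathcal{P}^{\psi}_{[0,1]},P^\partial,\mathcal{P}^E_{M^-})=\tau^h_\mu(gP^\partial g^{-1},\mathcal{P}^{\psi}_{[0,1]},\mathcal{P}^E_{M^-})-\tau^h_\mu(gP^\partial g^{-1},\mathcal{P}^{\psi}_{[0,1]},P^\partial),
\]
and then identify each of these two Maslov differences with one of the spectral flows in $\Delta$ via the equivariant spectral flow/Maslov correspondence on the cylinder: the one over the cylinder $[0,1]\times\partial M$ deforming between $P^\partial$ and $gP^\partial g^{-1}$ corresponds to $\mathrm{sf}_h\{D^{\psi,g}_{[0,1]}(s)\}$, and the interior deformation $D^{\psi}_{gP^{\partial}g^{-1}}(s)$ on $M^-$ corresponds to the other. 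This is the equivariant analog of the cobordism-invariance step in Dai--Zhang \cite{dai2006index}, and should follow directly by generalizing their gluing argument using the equivariant Maslov framework. Once this identity is established, substitution completes the proof of Theorem~\ref{mainthm}.
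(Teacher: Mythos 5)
Your proposal matches the paper's proof: you follow the same sequence — start from Lemma~\ref{lem:diff.ind}, substitute Theorem~\ref{thm:ind.T.psi}, apply Definition~\ref{equiDZeta1} to produce $-\overline\eta_h(\partial M,g)$ and reduce the problem to a Maslov reconciliation, and then use the equivariant spectral flow/Maslov correspondence from \cite{limwang2}. The only cosmetic difference is in Step 3, where you route through the cocycle property of $\tau^h_\mu$ before identifying the resulting Maslov triples with the two spectral flows, whereas the paper directly invokes \cite[Theorem 9.7]{limwang2} to convert each spectral flow to a Maslov double index $\mathrm{Mas}_h(\cP^\psi_{[0,1]}(s),\,\cdot\,)$ and then cites \cite[equation (6.2)]{limwang2} for the triple-minus-triple versus double-minus-double identity — these are two bookkeeping presentations of the same final step.
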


\begin{proof}
By using Lemma~\ref{lem:diff.ind} and then Theorem~\ref{thm:ind.T.psi}, we have 
\begin{align*}
\mathrm{Ind}_h(T^E_g)=&\mathrm{Ind}_h(T^E_{g,\psi})+\mathrm{sf}_h\{ D^\psi_{g P^\partial g^{-1}}(s)\}_{s \in [0,1]} \\
=&-\sum_i\left( \frac{1}{2\pi \sqrt{-1}} \right)^{m_i+1}\int_{M^h_{(i)}}\frac{\hat{A}(M^h)}{\mathrm{det}^{\frac12}(1-h e^{-R^{N_i}})} \mathrm{Tr}[h\exp(-R^E)]\mathrm{ch}_h(g, d)+ \\
&-\overline{\eta}_h(D^{\psi, g}_{[0,1]})+\tau^h_{\mu}(\cP^{\psi}_{[0,1]}, P^{\partial}, \cP^E_{M^-})+\mathrm{sf}_h\{ D^\psi_{g P^\partial g^{-1}}(s)\}_{s \in [0,1]}.
\end{align*}
By identifying equivariant spectral flow and equivariant Maslov (double) index (\cite[Theorem 9.7]{limwang2}), we have 
\begin{align*}
&\mathrm{sf}_h\{ D^\psi_{g P^\partial g^{-1}}(s)\}_{s \in [0,1]}=\mathrm{Mas}_h(\cP^{\psi}_{[0,1]}(s), \cP_{M^-}) \\
&\mathrm{sf}_h(D^{\psi, g}_{[0,1]}(s), 0\le s\le 1)=\mathrm{Mas}_h(\cP^{\psi}_{[0,1]}(s), P^{\partial}).
\end{align*}
On the other hand, we also obtain the equivalence between equivariant Maslov double and triple indices (\cite[equation (6.2)]{limwang2})
\begin{align*}
\tau^h_{\mu}(gP^{\partial}g^{-1}, &P^{\partial}, \cP^E_{M^-})-\tau^h_{\mu}(\cP^{\psi}_{[0,1]}, P^{\partial}, \cP^E_{M^-})\\
=&\mathrm{Mas}_h(\cP^{\psi}_{[0,1]}(s), \cP^E_{M^-})-\mathrm{Mas}_h(\cP^{\psi}_{[0,1]}(s), P^{\partial}).
\end{align*}
Thus, by \eqref{eq:equiDZeta1} and a simple algebraic calculation, we obtain 
\begin{align*}
\mathrm{Ind}_h(T^E_g)=&-\sum_i \left( \frac{1}{2\pi \sqrt{-1}} \right)^{m_i+1} \int_{M^h_{(i)}}\frac{\hat{A}(M^h)}{\mathrm{det}^{\frac12}(1-h e^{-R^{N_i}})} \mathrm{Tr}[h\exp(-R^E)]\mathrm{ch}_h(g, d)+ \\
&-\overline{\eta}_h(D^{\psi, g}_{[0,1]})+\tau^h_{\mu}(gP^{\partial}g^{-1}, P^{\partial}, \cP^E_{M^-})+\mathrm{sf}_h\{ D^{\psi,g}_{[0,1]}(s)\}.
\end{align*}
The proof is complete.
\end{proof}

\begin{remark}
Note that the equivariant Toeplitz index theorem for closed manifolds has been proved by Fang in \cite{fang2005equivariant}. Thus, Theorem \ref{mainthm} can be viewed as a generalisation to the case of manifolds with boundary. 
\end{remark}

Suppose $M$ is equivariant $\spinc$, so that the fixed point set is oriented. Denote by $L$ some line bundle associated to the spin$^c$ structure. Then, $L$ is also equivariant. The above settings and proofs extend and we obtain an equivariant Toeplitz index theorem on compact spin$^c$ odd-dimensional manifolds with boundary.
\begin{theorem} 
\label{mainthm2} 
For $h \in H,$
\begin{multline*}
\mathrm{Ind}_h(T^{E}_{g})=-\sum_i\left( \frac{1}{2\pi \sqrt{-1}} \right)^{m_i+1}\int_{M^h_{(i)}}\frac{\hat{A}(M^h)e^{\frac{c_{h,1}(L)}{2}}}{\mathrm{det}^{\frac12}(1-h e^{-R^{N_i}})}\\
\times \mathrm{Tr}[h\exp(-R^E)]\mathrm{ch}_h(g, d)-\overline\eta_h(\partial M, g)+\tau^h_{\mu}(gP^{\partial}g^{-1}, P^{\partial}, \cP^E_{M^-}).
\end{multline*}
where $c_{h,1}(L)/2=\mathrm{Tr}[h \exp(-R^L/2)]$ is half of the equivariant first Chern form.
\end{theorem}

\begin{remark}
Note that Theorem~\ref{mainthm} are independent of the choice of cutoff functions as $\overline{\eta}_h(\partial M,g)$ is. It follows almost verbatim of the proof of \cite[Proposition 5.1]{dai2006index}: consider two cutoff functions $\psi_1,\psi_2$ and define a smooth path $\psi_t=(2-t)\psi_1 + (t-1)\psi_2$ for $t \in [1,2].$ 
From \eqref{eq:Dpsigs2}, consider three paths:
\begin{align}
    D^{\psi_1,g}_{[0,1]}(s) &= D^E + (1-s\psi_1)g^{-1}[D^E,g], \label{eq:path1}\\
    D^{\psi_2,g}_{[0,1]}(s) &= D^E + (1-s\psi_2)g^{-1}[D^E,g], \label{eq:path2}\\
    D^{\psi_t,g}_{[0,1]}(1) &= D^E + (1-\psi_t)g^{-1}[D^E,g] = D^{\psi_t,g}_{[0,1]}  \label{eq:path3}.
\end{align}
Note that $D^{\psi_t,g}_{[0,1]}(0)$ is independent of $t.$ From the homotopy invariant and concatenation property of equivariant spectral flow, from \eqref{eq:path1}, \eqref{eq:path2}, and \eqref{eq:path3}, we have for $t\in [1,2],$
\begin{equation}\label{eq:rmk411a}
\mathrm{sf}_h\{D^{\psi_t,g}_{[0,1]}\} =  \mathrm{sf}_h\{D^{\psi_2,g}_{[0,1]}(s)\}  - \mathrm{sf}_h\{D^{\psi_1,g}_{[0,1]}(s)\}. 
\end{equation}
On the other hand, by the equivariant spectral flow formula \cite[Theorem 9.5]{limwang2}, we have 
\begin{equation}\label{eq:rmk411b}
  \bar{\eta}_h (D^{\psi_2,g}_{[0,1]}) - \bar{\eta}_h (D^{\psi_1,g}_{[0,1]}) 
  =\frac12 \int^2_{1} \frac{d}{dt} \bar{\eta}_h (D^{\psi_t,g}_{[0,1]}) dt + \mathrm{sf}_h\{D^{\psi_t,g}_{[0,1]}\}.
\end{equation}
Following the approach in Section \ref{sec4.2}, one computes that $\frac{d}{d t} D^{\psi_t,g}_{[0,1]} = (\psi_1-\psi_2)g^{-1}[D^E,g].$ The factor $(\psi_1-\psi_2)$ then carries over throughout the heat kernel computation and eventually causing the term in Lemma~\ref{eq:heattrint} to be zero because the first term of the limit \eqref{eq:heattrint2} reduces to the interior of manifold, and both $\psi_1$ and $\psi_2$ are by definition zero there. Thus, $\frac{d}{dt} \bar{\eta}_h (D^{\psi_t,g}_{[0,1]}) \equiv 0.$ Finally, by piecing \eqref{eq:rmk411a}, \eqref{eq:rmk411b}, and  
\eqref{eq:equiDZeta1} together, we obtain the desired conclusion.

\end{remark}

\begin{remark}

Application-wise, Theorem \eqref{mainthm} and Theorem \eqref{mainthm2} for the spin and $\spinc$ cases respectively, can be applied as the main framework for establishing the \textit{equivariant analytic Pontryagin duality}, which is a generalisation of \cite{lim2019analytic}. This is left as a future work. Such a duality, if established, is expected to describe the Aharonov-Bohm phenomenon in a special case under Type IIB String theory in physics.  

\end{remark}


\appendix
\setcounter{secnumdepth}{0}
\section{Appendix A. Cobordism invariance of equivariant index}
\label{AppendixA}


In this appendix, the cobordism invariance of the $H$-equivariant index of Dirac operators is established, following the elegant arguments of Nicolaescu \cite{nicolaescu97cobordism} closely (almost \textit{verbatim}). This $H$-cobordism invariance ensures the existence of the $H$-equivariant Lagrangian subspace $\cL$ of $\ker(D^E_{\partial M})$ in \S 3. We shall spell out all settings below for completeness and to avoid possible notational confusion.  

Let $X$ be a compact oriented $(2n+1)$-dimensional Riemannian manifold $(X^{2n+1},g_X)$ with $g_X$ a Riemannian metric and with boundary $Y^{2n}=\partial X$ such that $g_X$ is a product metric near the boundary. Denote by $s$ the longitudinal coordinate on a collar neighborhood of $Y.$ Assume an $H$-action on $X$ where $H$ is the closed subgroup generated by an element $h$ of a compact group $G$ of isometries of $X,$ which also extends to the collar neighborhood by acting trivially in the $s$-direction. Let $\hat{c}: T^*X \to \mathrm{End}(\hat{\cE})$ be the Clifford multiplication where $\hat{\cE} \to X$ is a bundle of complex self-adjoint Clifford modules (cf. \cite{berline03heat}). Let $\cE=\hat{\cE}|_{Y}.$ Assume that $\mathrm{End}(\cE) \cong Cl(T^*Y) \otimes \C.$ Thus, $Y$ admits a $\spinc$ structure and $\cE$ is the associated bundle of complex spinors. Assume $H$ preserves the $\spinc$ structures on both $X$ and $Y$ respectively. 

Denote by $D: C^\infty(\hat{\cE}) \to C^\infty(\hat{\cE})$ a formally self-adjoint Dirac operator such that it takes the form $D=\hat{c}(ds) (\nabla_s+ A)$ over the collar neighborhood, where $A: C^\infty(\hat{\cE}|_{\{s\} \times Y}) \to C^\infty(\hat{\cE}|_{\{s\} \times Y})$ is the formally self-adjoint tangential operator independent of $s.$ By equipping $\hat{\cE}$ with an $H$-invariant connection, we may assume $D,$ and thus $A,$ are $H$-equivariant. (The twisted case by an $H$-equivariant complex vector bundle $E$ can be done accordingly.) Set $\gamma=\hat{c}(ds).$ Since both $D$ and $A$ are symmetric, we have the anticommutation property 
\begin{equation}\label{eq:anticomm1}
\{\gamma, A\}=\gamma A+ A\gamma=0.  \tag{A.1}
\end{equation}
It follows from \cite{nicolaescu97cobordism} that the anticommutation property implies 
\[
A=
\begin{pmatrix}
0 & A_- \\
A_+ & 0 
\end{pmatrix}
\]
where $A_\pm: C^\infty(\cE_{\pm}) \to C^\infty(\cE_{\mp})$ and $A_-=A_+^*.$ For every $h\in H,$ define the $h$-equivariant analytic index by $\ind_h(A)= \tr(h|_{\ker(A_+)}) - \tr(h|_{\ker(A_-)}),$ cf. \cite{berline03heat}. 

\begin{manualtheorem}{A}
\label{cobordism}
With the settings above, the cobordism invariance of the $H$-equivariant index asserts that $\mathrm{Ind}_h(A)=0$ for all $h\in H.$ 
\end{manualtheorem}

\begin{proof}
For $t \gg 0,$ denote by $X_t$ the manifold obtained by attaching the long cylinder $[0,t] \times Y$ on the `right' side. The $H$ action extends trivially to $X_t.$ Moreover, $\hat{\cE}$ and $D$ also extend naturally to $\hat{\cE}_t$ and $D_t$ respectively over $X_t.$ For every $r\geq 0,$ denote by $L^{r,2}$ the Sobolev space of distributions in $L^2$ with $L^2$-derivatives up to order $r.$ Set 
\[
\cK_t = \{u \in L^{\frac{1}{2},2}(\hat{\cE}_t)\:|\: D_t u=0\}.
\] 
Then, by \cite{booss2012elliptic} there exists a well-defined continuous restriction map $r_t: \cK_t \to L^2(\hat{\cE}_t|_{\partial  X_t}).$ The image $\Lambda_t:=r_t(\cK_t)$ is classically known as the Cauchy Data space. Since $h$ commutes with both $\hat{\cE}_t$ and $D$ for all $h\in H,$ $\Lambda_t$ is an $H$-equivariant closed subspace of $L^2(\cE).$ The $H$-equivariance of $A,$ i.e. the $H$-action preserves the property \eqref{eq:anticomm1}, the property $\Lambda^\perp_t = \gamma \Lambda_t$ established in \cite{booss2012elliptic} still holds.

Let $\cH_I$ be the closed subspace of $L^2(\cE)$ spanned by the eigenvectors of $A$ corresponding to eigenvalues in $I,$ for every interval $I \subset \R.$ It was shown by Nicolaescu \cite{nicolaescu95maslov} that there exist a real number $r \geq 0$ and an $A$-invariant subspace $L_\infty \subset \cH_{[-r,r]}$ such that 
\begin{equation}\label{eq:lagperp1}
L_\infty^{\perp}=\gamma L_\infty, \tag{A.2} 
\end{equation} 
and $\Lambda_t \to \Lambda_\infty=L_\infty \oplus \cH_{(-\infty,-r]}$ in the gap topology of Kato \cite{kato95pertubation}. Let $P_\infty$ be the orthogonal projection onto $\Lambda_\infty,$ which commutes with $H$-actions given that $\Lambda_\infty$ is $H$-equivariant. Let $R_\infty=2P_\infty -\id$ be the orthogonal reflection in $\Lambda_\infty.$ Since $P_\infty$ commutes with all $h\in H,$ so is $R_\infty.$ Then, \eqref{eq:lagperp1} is equivalent to the anticommutation property 
\begin{equation}\label{eq:anticomm2}
R_\infty\gamma = - \gamma R_\infty.   \tag{A.3}
\end{equation} 
Since $\cH_{[-r,r]}$ is $\gamma$-invariant by \eqref{eq:anticomm1}, $\cH_{[-r,r]}=\cH^+_r \oplus \cH^-_r$ where $\cH_r^{\pm}$ is the $\pm 1$-eigenspace of $\sqrt{-1}\gamma$ on $\cH^{\pm}_{r}.$ Moreover, $\cH_{[-r,r]}$ is $H$-equivariant since $A$ is assumed to be. Then, \eqref{eq:anticomm2} implies that $R_\infty(\cH^{\pm}_r)=\cH_r^{\mp},$ which is an isomorphism. The $H$-equivariance of $\cH^{\pm}_r$ is preserved under this reflection. Since $\ker(A_\pm) \subset \cH^{\pm}_r$ and since $L_\infty$ is $H$-equivariant and $A$-invariant, the isometry between $\ker(A_+)$ and $\ker(A_-)$ is implemented by $R_\infty.$ This means that $\tr(h|_{\ker(A_+)})$ and $\tr(h|_{\ker(A_-)})$ coincide and thus $\ind_h(A)=0.$    
\end{proof}


\appendix
\setcounter{secnumdepth}{0}
\section{Appendix B. Regularity of equivariant $\eta$-functions at $s=0.$}

In this appendix, we show the regularity of equivariant $\eta$-functions at $s=0$ by adopting an elegant method by Wojciechowski in \cite{wojciechowski1999zeta}.

Let $M$ be an odd-dimensional compact spin manifold with boundary $\partial M.$  Assume the collar $N =[0,1]\times \partial M$ admits a metric of product type. Then, the operator $D$ has the form $\gamma(\partial_t + A)$ over $N.$ Let $\cH$ be a complex separable Hilbert space with a fixed basis. For a closed subgroup $H$ of a compact group $G$ of isometries on $M$ (as described in Section 3), let $\cH'=L^2(H,\cH)$ be the space of square integrable functions on $H$ with values in $\cH.$ Let
\begin{align*}
\Gr^*_h(\cH')&=\{P \in \cB(\cH') \mid P^2=P=P^*, \gamma P\gamma^*=\id-P, P-P_{\geq} \text{ operator of order -1}\}, \\
\Gr^*_{h,\infty}(\cH')&=\{P \in \Gr^*_h(\cH') \mid P-P_\geq \text{ has smooth kernel}\}.
\end{align*}
In particular, the modified boundary projection $P^\partial$ defined in \ref{eq:modproj1} belongs to  $\Gr^*_{h,\infty}(\cH').$
Recall from \cite[Lemma 9.2]{limwang2} that the equivariant $\eta$-function $\eta_h(D,s)$ is the Mellin transform of $\tr(hDe^{-tD^2})$ for $h \in H,$ i.e. 
\begin{equation}
    \eta_h(D,s) = \frac{1}{\Gamma \left(\frac{s+1}{2} \right)} \int_0^\infty t^{\frac{s-1}{2}} \tr(hDe^{-tD^2}) dt.      \tag{B.1}
\end{equation}
This is the heat kernel representation of the equivariant $\eta$-function of $D$. This allows us to write the equivariant $\eta$-invariant of $D$ as 
\begin{equation}
   \eta_h(D)= \eta_h(D,0) = \frac{1}{\sqrt{\pi}} \int_0^\infty \frac{1}{\sqrt{t}} \tr(hDe^{-tD^2}) dt.         \tag{B.2}
\end{equation}

Denote by $D_P$ the operator $D$ equipped with the boundary projection $P \in \Gr^*_{h,\infty}(\cH').$ We aim to show the following claim.

\begin{manualtheorem}{B.1} \label{thmB.1}
For any $P \in \Gr^*_{h,\infty}(\cH'),$ there exists a positive constant $c>0$ such that for any $t \in [0,1],$ we have the estimate 
\begin{equation}
    |\tr(hD_P e^{-tD^2_P})| < c.    \tag{B.3}
\end{equation}
\end{manualtheorem} 

Note that the integral in (B.3) converges over $[1,\infty)$ because $D_P$ is self-adjoint and has discrete spectrum. Hence, 
Theorem \ref{thmB.1} regarding the small time estimate, once established, will be sufficient to imply that the function $\eta_h(D_P,s)$ is a holomorphic function of $s$ in the half-plane $\mathrm{Re}(s)>-1$ for any $P \in \Gr^*_{h,\infty}(D).$ Thus, the equivariant $\eta$-function $\eta_h(D_P,s)$ is regular at $s=0$ for any $P \in \Gr^*_{h,\infty}(D).$ In particular, as our special interest in $\S 3$, this regularity holds when $P=P^\partial.$  

The work of Wojciechowski in \cite{wojciechowski1999zeta} is extensive and highly technical. Thus, we shall only specify necessary modifications in equivariant settings and refer readers to his paper for full detail and application.   

Consider the path $\{g_t\}_{t\in [0,1]}$ formulated in Lemma~1.2 of \cite{wojciechowski1999zeta}, where $g_t$ is constant on $[0,1/4]$ and on $ [3/4,1].$   Define a unitary operator $U$ on $L^2(M,S)$ that acts as constant $U=g_t$ on the collar $N$ and as identity $U=\id$ on $M\backslash N.$ Since $h \in H$ commutes with $D$ and $U,$ one can show that $D_P$ is unitarily equivalent to  $D_{U,\partial}=(U^{-1}DU)_{P^\partial}$, cf. \cite[Lemma 1.3]{wojciechowski1999zeta}. In particular, over the collar $[0,1/4] \times \partial M,$ one obtains
\begin{equation}\label{eq:unitB1}
U^{-1}DU=D+K_1, \quad U^{-1}D^2U = D^2 + K_2,  \tag{B.4}
\end{equation} 
where $K_1=\gamma U^{-1}[A,U]$ and $K_2=U^{-1}[A^2,U].$ One verifies that $K_1$ anticommutes and $K_2$ commutes with $\gamma,$ and furthermore $hK_i=K_ih$ for $i=1,2.$ By unitary equivalence, the operator $D_{U,\partial}$ can be studied in place of $D_P.$ The representation \eqref{eq:unitB1} can then be used to construct the parametrix of the kernel of the operator $D_{U,\partial}e^{-tD^2_{U,\partial}}.$ The parametrix is built by gluing the heat kernel on the double manifold $\widetilde{M}$ and the heat kernel on the cylinder. The objects $S, D, U,$ and $h$ on $M$ can be extended to $\widetilde{S}, \widetilde{D},\widetilde{U},$ and $\widetilde{h}$ on $\widetilde{M}$ respectively. Consider the conjugation $\widetilde{U}^{-1}\widetilde{D}\widetilde{U} : C^\infty(\widetilde{M},\widetilde{S}) \to C^\infty(\widetilde{M},\widetilde{S}).$ Then, $\widetilde{U}^{-1}\widetilde{D}\widetilde{U}$ is unitarily equivalent to $\widetilde{D}.$ Moreover, $\widetilde{U}^{-1}\widetilde{D}\widetilde{U}e^{-t(\widetilde{U}^{-1}\widetilde{D}\widetilde{U})^2}$ $=\widetilde{U}^{-1}\widetilde{D}e^{-t\widetilde{D}^2}\widetilde{U}.$ 
By Duhamel's Principle, on $M\backslash N$ up to exponentially small error in $t,$ the kernel $\widetilde{E}_U(t;x,y)$ of $\widetilde{U}^{-1}\widetilde{D}\widetilde{U}e^{-t(\widetilde{U}^{-1}\widetilde{D}\widetilde{U})^2}$  is equal to the kernel $E_{U,\partial}(t,x,y)$ of $D_{U,\partial} e^{-t D^2_{U,\partial}}$ for $t \in (0,1).$ Hence we have:

\begin{manuallemma}{B.2} \label{lemmaB.2}
Let $M_{1/8}= M \backslash ([0,1/8] \times \partial M).$ For any $\epsilon>0,$ there exists a positive constant $C$ such that 
for any $t \in (0,1),$ 
the estimate
\begin{equation}
    \int_{M_{1/8}}\| E_{U,\partial} (t,x,hx) - \widetilde{E}_U(t,x,hx)\|dx \leq C(e^{-\frac{C}{t}}+\epsilon)
    \tag{B.5}
\end{equation}
holds. 
\end{manuallemma}

\begin{proof} 
Following the lines from \cite[Theorem 1.2, 2.4, 4.1]{douglas1991adiabatic}, one has positive constants $c_1, c_2, c_3$ independent of $h, x$ such that  
\[
\|E_{U,\partial}(t,x,hx)\|<c_1e^{c_2t}t^{-\frac{n+1}{2}}e^{-c_3\frac{d(x,hx)^2}{t}}
\]
and 
\[
\|\widetilde E_{U}(t,x, hx)\|<c_1e^{c_2t}t^{-\frac{n+1}{2}}e^{-c_3\frac{d(x,hx)^2}{t}}.
\]
Then, when $x\neq hx$,
\begin{equation}
\label{xneqhx}
\|E_{U,\partial}(t,x, hx)-\widetilde E_{U}(t,x, hx)\|\le 2c_1e^{c_2t}t^{-\frac{n+1}{2}}e^{-c_3\frac{d(x,hx)^2}{t}}. \tag{B.6}
\end{equation}
When $x=hx,$ from Lemma 1.4 of \cite{wojciechowski1999zeta}, there exists $c_1',c_2'>0$ independent of $x$ such that 
\[
\|E_{U,\partial}(t,x, hx)-\widetilde E_{U}(t,x, hx)\|\le c_1'e^{-\frac{c_2'}{t}}.
\]
For every $\epsilon>0,$ by the continuity of the $E_{U,\partial}, \widetilde E_U$ in $x$, there exists $\delta>0$ such that for $y$ satisfying $d(y, M^h)<\delta$ so that 
\[
\|E_{U,\partial}(t,y, hy)-\widetilde E_{U}(t,y, hy)\|\le c_1'e^{-\frac{c_2'}{t}}+\epsilon.
\]
Also by the continuity of the distance function, the set $\{d(y,hy)^2\in M \mid d(y, M^h)\ge \delta\}\subset(0,\infty)$ is compact and hence have a positive lower bound $\delta'.$
Together with (\ref{xneqhx}), we see that there exists $d_1,d_2>0$, for all  $d(y,M^h)>\delta$, and $t\in(0,1)$, we have 
\[
\|E_{U,\partial}(t,x, hx)-\widetilde E_{U}(t,x, hx)\|\le d_1e^{-\frac{d_2\delta'}{t}}.
\]
Use the estimates above to integrate $\|E_{U,\partial}(t,x, hx)-\widetilde E_{U}(t,x, hx)\|$ in and outside the $\delta$-neighborhood of $M^h$, the result can be obtained. 
\end{proof}

In the rest of the appendix, we shall obtain an estimate for the kernel of $D_{U,\partial} e^{-tD^2_{U,\partial}}$ restricted to the collar $N$.
Because Duhamel's Principle can be applied again to replace the kernel $E_{U,\partial}(t,x,y)$ of $D_{U,\partial} e^{-tD^2_{U,\partial}}$ on $N$ by the corresponding kernel for $\gamma(\partial_u+A+K_1)e^{-t(-\partial_u^2+A^2+K_2)_{P^{\partial}}}$ on the infinite cylinder $Z:=[0,\infty)\times \partial M$, we only need to focus on this operator on $Z$ below. 
 
 Consider the operator $D_\partial=\gamma(\partial_u+A)_{P^\partial}$ on $Z.$ Then, its square can be computed to be $(-\partial^2_u+A^2)_{P^\partial},$ which is an unbounded self-adjoint operator in $L^2(Z,S).$ By adding $K_2$ from \eqref{eq:unitB1} to form the operator $(-\partial^2_u+A^2+K_2)_{P^\partial}$, the semigroup 
\begin{equation}\label{eq:et1}
e(t):= e^{-t(-\partial^2_u+A^2+K_2)_{P^\partial}}  \tag{B.7}
\end{equation} 
is well-defined, cf. \cite{wojciechowski1999zeta} and references therein. 
The heat operator $e(t)$ is closely related to $e_1(t):=e^{-t(-\partial^2_u + A^2)_{P^\partial}}$ by \cite[\S 22C]{booss2012elliptic}:
\begin{equation}\label{eq:et2}
    e(t)= e_1(t) + \sum^n_{n=1} \{e_1 \ast_n K_2e_1\}(t), \tag{B.8}
\end{equation}
where $\ast_n$ denotes the $n$-times convolution, i.e., $\ast_{n} K_2e_1= K_2e_1 \ast \cdots \ast K_2e_1.$

From \cite[Appendix A]{wojciechowski1999zeta}, by studying the classical heat operator $e^{-t(-\partial^2_u+A^2)_{P^\partial}}$ restricted to the eigenspace associated to each eigenvalue $\mu_n$ of $A$,  the eigenfunction in the direction normal to the boundary has the form 
\begin{equation}\label{eq:gn1}
g_n(t,u,v) = \frac{e^{-\mu^2_nt}}{2\sqrt{\pi}t}\left\{ e^{-\frac{(u-v)^2}{4t}} - e^{-\frac{(u+v)^2}{4t}} \right\}, \quad \text{ for } n>0,  \tag{B.9}
\end{equation}
and 
\begin{align*}
g_n(t,u,v) = &\frac{e^{-(-\mu)^2_nt}}{2\sqrt{\pi}t}\left\{ e^{-\frac{(u-v)^2}{4t}} - e^{-\frac{(u+v)^2}{4t}} \right\}, \label{eq:gn2} \tag{B.10} \\
 &+ (-\mu_n) e^{-(-\mu_n)(u+v)} \mathrm{erfc}\left( \frac{u+v}{2\sqrt{t}} - (-\mu_n)\sqrt{t} \right)\quad \text{ for } n<0,  \nonumber
\end{align*}
where $\mathrm{erfc}(x) =\displaystyle \frac{2}{\sqrt{\pi}} \int^\infty_x e^{-r^2}dr.$
Then the kernel $E_1(t,(u,y),(v,z))$ of $e_1(t)$ can be explicitly represented by
\begin{equation}\label{eq:kerE1}
E_1(t,(u,y),(v,hy)) = \sum_{n\in \Z \backslash \{0\}} g_n(t,u,v) \varphi_n(y) \otimes h^*\varphi_n^*(hy), \quad h \in H. \tag{B.11}
\end{equation}
We obtain the following lemma.

\begin{manuallemma}{B.3} \label{lemmaB.3}
$\displaystyle \int_{\partial M} \mathrm{Tr} \gamma (\partial_u + A)E_1(t,(u,y),(v,hy))|_{\substack{u=v\\y=hy}} dy =0.$
\end{manuallemma}

\begin{proof}
Choose a spectral resolution $\{\varphi_n,\mu_n\}_{n \in \Z \backslash \{0\}}$ of the tangential operator $A$ such that $A\varphi_n=\mu_n\varphi_n,$ $P^\partial \varphi_n=0$ for $n \geq 0;$ and $B=\gamma(\varphi_n)=-\mu_n(\gamma \varphi_n),$  $P^\partial (\gamma \varphi_n)=\gamma \varphi_n$ for $n < 0.$ Then, for $h \in H,$
\begin{align*}
& (\partial_u+A) E_1(t,(u,y),(v,hy)) \\
&= \sum_{n>0} \frac{e^{\mu_n^2t}}{2\sqrt{\pi t}} \left\{ \left(\frac{-2(u-v)}{4t}+\mu_n\right) e^{\frac{-(u-v)^2}{4t}} + \left(\frac{2(u+v)}{4t} -\mu_n\right) e^{\frac{-(u+v)^2}{4t}} \right\} \varphi_n(y) \otimes h^*\varphi^*_n(hy) \\
&+ \sum_{n<0} \frac{e^{\mu_n^2t}}{2\sqrt{\pi t}} \left\{ \left(\frac{-2(u-v)}{4t}-\mu_n\right) e^{\frac{-(u-v)^2}{4t}} - \left(\frac{2(u+v)}{4t} +\mu_n\right) e^{\frac{-(u+v)^2}{4t}} \right\} \varphi_n(y) \otimes h^*\varphi^*_n(hy) \\
& -\sum_{n<0} \left[\mu_n^2 e^{\mu_n(u+v)} \mathrm{erfc}\left(\frac{u+v}{2\sqrt{t}}+ \mu_n\sqrt{t}\right) + \frac{1}{\sqrt{\pi t}}\mu_n e^{-\mu_n(u+v)} e^{-\left(\frac{u+v}{2\sqrt{t}}+ \mu_n\sqrt{t}\right)^2}\right] \varphi_n(y) \otimes h^*\varphi^*_n(hy) \\
&+ \sum_{n<0} \mu_n^2 e^{\mu_n(u+v)} \mathrm{erfc}\left(\frac{u+v}{2\sqrt{t}}+ \mu_n\sqrt{t}\right) \varphi_n(y) \otimes h^*\varphi^*_n(hy).
\end{align*}
When $u=v,$ it follows from the symmetry of spectrum of $A$ that the sum $(\partial_u+A) E_1$ vanishes and thus the integral over $\partial M$ is identically zero.
\end{proof}

Furthermore, one observes that if we split the trace into two separate sums, and rewrite $\gamma AE_1$ in a matrix form under $\Z_2$ grading, the graded terms lie off-diagonally. Thus, we obtain $\displaystyle \int_{\partial M} \mathrm{Tr} \gamma  AE_1(t,(u,y),(v,hy))|_{\substack{u=v\\y=hy}} dy =0.$ From the previous lemma, we have 
\begin{equation}\label{eq:int2}
\displaystyle \int_{\partial M} \mathrm{Tr} \gamma \partial_u E_1(t,(u,y),(v,hy))|_{\substack{u=v\\y=hy}} dy =0. \tag{B.12}
\end{equation}

We need to show there exists a positive constant $C$ such that for any $0 \leq u \leq 1/8,$
\begin{equation}\label{eq:estimate1}
\left| \int_{\partial M} \mathrm{Tr} [\gamma (\partial_u + A) + K_1] E(t,(u,y),(v,hy))|_{\substack{u=v\\y=hy}} dy \right| < C, \quad y \in \partial M, h\in H. \tag{B.13}
\end{equation}
where $E(t,(u,y),(v,z))$ refers to heat kernel of $e(t).$ By the commutativity (resp. anticommutativity) of the involution $\gamma$ with $e_1$ and $K_2$ (resp. $A$ and $K_1$), the integral with the terms $\gamma A E$ and $K_1E$ both vanish, cf. \cite[pg 431]{wojciechowski1999zeta}. By combining all discussion above, it reduces to study 
\begin{equation}
    \int_{\partial M} \mathrm{Tr} \gamma \partial_u   \left(\sum^n_{n=1} \{e_1 \ast_n K_2e_1\}\right)(t,(u,y),(v,hy))|_{\substack{u=v\\y=hy}} dy, \tag{B.14}
\end{equation}
for which we need to estimate the first term $\displaystyle \int_{\partial M} \mathrm{Tr} \gamma \left(\partial_u e_1\right) \ast K_2e_1(t,(u,y),(v,hy))|_{\substack{u=v\\y=hy}} dy.$
At this stage, the lengthy computation and argument follow almost verbatim as in \cite{wojciechowski1999zeta}, so we only state the relevant result.

\begin{manualtheorem}{B.4} \label{thmB.4} \cite[Compare Cor 2.2]{wojciechowski1999zeta}
Let $p(u)$ be a non-increasing smooth function equal to $1$ for $u\leq 1/8$ and equal to $0$ for $u \geq 1/4.$ Then, there exists  $c>0$ such that 
\begin{align*}
\left| \mathrm{Tr} p(u) \gamma\{\partial_u (e_1) \ast K_2e_1 \}(t)\right| \leq c\sum \mathrm{Tr}(h^*_\lambda)\mathrm{Tr}|K_2|,  \\
\left| \mathrm{Tr} p(u) \{e_1 \ast K_2e_1 \}(t)\right| \leq c \sqrt{t}\sum \mathrm{Tr}(h^*_\lambda)\mathrm{Tr}|K_2|, 
\end{align*}
where $h^*_\lambda$ is the induced linear map on the $\lambda$-eigenspace and the sum is over all nonzero $\lambda$.
\end{manualtheorem}

Finally, following \cite[pg 433]{wojciechowski1999zeta} we have the estimate
\begin{equation}
    \left| \mathrm{Tr} p(u) \{\gamma(\partial_u +A) + K_1\} e(t)\right| \leq c_1 \sum \mathrm{Tr}(h^*_\lambda) \mathrm{Tr}|K_2| e^{c_2 t ||K_2||} \tag{B.15}
\end{equation}
for some positive constants $c_1$ and $c_2,$ which completes the proof of Theorem \ref{thmB.1}. 


\bibliography{bibliography}{}
\bibliographystyle{amsplain}
\end{document}